\newtheorem{theorem}{Theorem}[section] 
\newtheorem{lemma}[theorem]{Lemma} 
\newtheorem{proposition}[theorem]{Proposition}
\theoremstyle{remark}
\newtheorem{remark}[theorem]{\it \bf{Remark}\/}
\numberwithin{equation}{section}
\def\section{\@startsection{section}{1}%
  \z@{1.5\linespacing\@plus\linespacing}{.5\linespacing}%
  {\normalfont\bfseries\large\centering}}
\newcommand{\be}{\begin{equation}}
\newcommand{\ee}{\end{equation}}
\newcommand{\bea}{\begin{eqnarray}} 
  \newcommand{\eea}{\end{eqnarray}} 
\newcommand{\bee}{\begin{eqnarray*}}
\newcommand{\eee}{\end{eqnarray*}}
\def\NN{\mathbb{N}}
\def\RR{\mathbb{R}}
\def\ds{\displaystyle}
\def\ni{\noindent} 
\def\bs{\bigskip}
\def\ms{\medskip}
\def\ss{\smallskip}
\def\eps{\varepsilon}
\def\fref#1{{\rm (\ref{#1})}}
\def\pref#1{{\rm \ref{#1}}}
\def\calL{{\mathcal L}}
\def\calC{{\mathcal C}}
\def\calA{{\mathcal A}}
\def\calE{{\mathcal E}}
\def\calO{{\mathcal O}}
\def\calJ{{\mathcal J}}
\def\calK{{\mathcal K}}
\def\Hunper{{\rm H}^1_{per}}
\def\H{{\rm H}}  
\def\supess{\mathop{\operator@font Sup\,ess}}
\DeclareMathOperator{\Tr}{Tr}
\def\un{{\mathbbmss{1}}} 
\title[]{An inverse problem in\\ quantum statistical physics}
\author[F. M\'ehats]{Florian M\'ehats}
\address[F. M\'ehats]{IRMAR, Universit\'e de Rennes 1 and IPSO, INRIA Rennes, France}
\email{florian.mehats@univ-rennes1.fr}
\author[O. Pinaud]{Olivier Pinaud}
\address[O. Pinaud]{Institut Camille Jordan, ISTIL, Universit\'e de Lyon 1, France}
\email{pinaud@math.univ-lyon1.fr}
\begin{document}

\begin{abstract}
We address the following inverse problem in quantum statistical physics: does the quantum free energy (von Neumann entropy + kinetic energy) admit a unique minimizer among the density operators having a given local density $n(x)$? We give a positive answer to that question, in dimension one. This enables to define rigourously the notion of local quantum equilibrium, or quantum Maxwellian, which is at the basis of recently derived quantum hydrodynamic models and quantum drift-diffusion models. We also characterize this unique minimizer, which takes the form of a global thermodynamic equilibrium (canonical ensemble) with a quantum chemical potential. 
\end{abstract}


\maketitle
\titlecontents{section} 
[1.5em] 
{\vspace*{0.1em}\bf} 
{\contentslabel{2.3em}} 
{\hspace*{-2.3em}} 
{\titlerule*[0.5pc]{\rm.}\contentspage\vspace*{0.1em}}

\vspace*{-2mm}


\section{Introduction}

We deal with a question which is at the core of recently derived quantum hydrodynamic models based on an entropy minimization principle \cite{DR,QET}. Let a given density of particles $n(x)\geq 0$, can we find a minimizer of the quantum free energy among the density operators $\varrho$ having $n(x)$ as local density, i.e. satisfying the constraint $\rho(x,x)=n(x)$, where $\rho(x,y)$ denotes the integral kernel of $\varrho$?

This question arises in the moment closure strategy initially introduced by Degond and Ringhofer in \cite{DR} in order to derive quantum hydrodynamic models from first principles. Let us briefly review this theory (for more details, one can refer to the reviews \cite{QHD-review,DGMRlivre}). The quest of macroscopic quantum models is motivated by applications such as nanoelectronics, where affordable numerical simulations of the electronic transport are necessary while the miniaturization of devices now imposes to take into account quantum mechanical effects in the models, resulting in a higher simulation cost. At the microscopic level of description, the Schr\"odinger equation and the quantum Liouville equation are numerically too expensive, which motivates the derivation of models at a more macroscopic level. In the classical setting, the relationships between microscopic (kinetic) and macroscopic (fluid) levels of description are fairly well understood by means of asymptoti
 c analysis, see for instance \cite{golse-lsr,golse,ovy}. In particular, it is known that the understanding of the structure of the fluid model relies on the properties of the collision operator at the underlying kinetic level. Indeed, collisions are the source of entropy dissipation, which induces the relaxation of the system towards local thermodynamical equilibria. The free parameters of these local equilibria are the moments of the system (e.g. local density, momentum and energy) and are driven by the fluid equations. Arguing that the derivation of precise quantum collision operators is a very difficult task, while only the macroscopic properties of such operators is needed in our context, Degond and Ringhofer have grounded their theory on a notion of quantum local equilibria. To do so, they have generalized Levermore's moment approach \cite{levermore} to the quantum setting. The idea consists in closing the system of moment equations by defining a local equilibrium as th
 e minimizer of an entropy functional (say, the von Neumann entropy) under moment constraints. 

In \cite{QET}, this approach was adapted so as to describe systems in strong interaction with their surrounding media and obtain quantum macroscopic models by applying a diffusive asymptotics. The most simple of these models, the quantum drift-diffusion model, was studied numerically in \cite{QDD-SIAM,QDD-JCP} and the simulation results for one-dimensional devices such as resonant tunneling diodes were encouraging.  This model is based on the most elementary constrained entropy minimization problem. Indeed, in this case, the local quantum equilibrium at a given temperature, also called quantum Maxwellian, is defined as the minimizer of the quantum free energy subject to a local constraint of prescribed density. Note that not only the total number of particles is fixed, as in the usual quantum statistics theory (for the so-called canonical ensemble), but also the local density $n(x)$ is imposed at any point $x$ of the physical space. This problem has been studied formally in \
 cite{QET} and the Lagrange multipliers theory lead to the existence of a quantum chemical potential $A(x)$ such that the solution of the minimization problem is a density operator of the form
\be
\label{ce}
\varrho=\exp\left(-\frac{-\Delta+A(x)}{T}\right).
\ee
Remark that the difficulty in this problem lies in the fact that its solution will depend on its data in a global way. The similar problem in classical physics, i.e. reconstructing $f(x,v)=\exp(-(\frac{|v|^2}{2}+A(x)))$ from its density $n(x)=\int f(x,v)dv$, is very simple and the chemical potential, given by $A(x)=-\log n(x)+\frac{3}{2}\log (2\pi)$, depends on $n(x)$ in a local way. Here, due to the operator formalism of quantum mechanics, which is not commutative, the density and the associated chemical potential are linked together by a non-explicit formula, and in a global manner. 

To end this short presentation, let us also recall that this quantum drift-diffusion model displays formally several interesting properties: it dissipates a quantum fluid entropy, which indicates that it should be mathematically well-posed, and it can be related to other known models after some approximations (for instance, semiclassical expansions on the quantum drift-diffusion system enable to derive the density-gradient model). Besides, a whole family of quantum fluid models were derived by several authors, based on the same entropy minimization principle: quantum Spherical Harmonic Expansion (QSHE) models \cite{QSHE}, quantum isothermal Euler systems \cite{jungel-matthes,isotherme}, quantum hydrodynamics \cite{jungel-matthes-milisic,QHD-CMS}, models with viscosity \cite{brull-mehats,jungel,jungel2}, quantum models for systems such as subbands \cite{ringhofer} or spins \cite{barletti-mehats}. Nevertheless, one has to put the emphasis on the fact that all these studies rema
 in yet {\em at a formal level}. Even the notion of local quantum equilibrium has only been defined formally and this problem of entropy minimization under local constraints is widely open.

The aim of this paper is to make a first step towards the rigorous justification of these models, by studying the quantum entropy minimization principle in the most simple situation, in the case of a density constraint. We work in dimension one, in a finite box with periodic boundary conditions. Our main result, Theorem \ref{theo1}, is presented after a few notations in the next section. We show that, in an appropriate functional framework, the quantum Maxwellian is properly defined, i.e. that to any density $n(x)>0$ corresponds a unique density matrix $\varrho$ minimizing the free energy. Moreover, we prove that $\varrho$ actually takes the form \fref{ce}, where $A(x)$ is the quantum chemical potential (in the sequel of the paper, the temperature $T$ will be set to 1).

Let us now make a remark. One can see on the formula \fref{ce} that the quantum Maxwellian reads as the global equilibrium canonical ensemble associated to the Hamiltonian $-\Delta+A(x)$, where the chemical potential $A(x)$ is seen as an applied potential. Hence, our problem can be reformulated as the following inverse problem in quantum statistical mechanics. Let a system at thermal equilibrium with a surrounding media at a given temperature, in a certain potential. Can we reconstruct the potential from the measurement of the density at any point? This problem has been much less studied than more standard inverse problems such as the inverse scattering theory (reconstructing the potential from its scattering effects) or the inverse spectral problem (reconstructing the potential from the spectrum of the associated Hamiltonian). Nevertheless, one can quote at least two references where similar inverse problems have been investigated. In \cite{lemm} (see also the series of ref.
  5 therein), a practical method for reconstructing potentials from measurements was settled using Feynman path integrals and, in \cite{hugenholtz}, a close problem for quantum spin systems was studied.

The outline of this paper is as follows. In Section \ref{sect2}, we define the functional framework of the paper and state our main theorem. In Section \ref{sect3}, we study the entropy and the free energy and give some useful results for the sequel. In Section \ref{sect4}, we prove the existence and uniqueness of the minimizer $\varrho[n]$ associated to a density $n$. Section \ref{sect5} is devoted to the characterization of $\varrho[n]$ via the Euler-Lagrange equation for the minimization problem. To deal with the constraint, we introduce a penalized problem.

Future developments of this work will involve several directions. A first extension will concern the investigation of other spatial configurations: other boundary conditions, whole-space case, or space dimension greater than one. We will also investigate the entropy minimization problem with constraints of higher order moments. As in the case of classical physics, it might lead to ill-posed problems and to delicate problems of moment realizability \cite{junk}. Another interesting question which remains to be solved concerns the quantum evolution: can we define an evolution for a quantum Liouville equation with a BGK-relaxation operator based on the local equilibria defined in this paper, as for instance in \cite{arnold} for other relaxation operators? This issue is linked to the possibility of  rigourously deriving the quantum drift-diffusion model.

\section{Notations and main result}
\label{sect2}
Let us describe the functional framework of this paper. The physical space that we consider is monodimensional and bounded. The particles are supposed to be confined in the torus $[0,1]$, i.e. with periodic boundary conditions. We consider the Hamiltonian $$H=-\frac{d^2}{dx^2}$$ on the space $L^2(0,1)$ of complex-valued functions, equipped with the domain
$$D(H)=\left\{u\in \H^2(0,1):\,u(0)=u(1),\,\frac{du}{dx}(0)=\frac{du}{dx}(1)\right\}.$$
The domain of the associated quadratic form is
$$\Hunper=\left\{u\in \H^1(0,1): \,u(0)=u(1)\right\}.$$
Its dual space will be denoted $\H^{-1}_{per}$. Remark that one has the following identification:
\be
\forall u,v \in \Hunper,\qquad (\sqrt{H}u,\sqrt{H}v)=\left(\frac{du}{dx},\frac{dv}{dx}\right),\quad \|\sqrt{H}u\|_{L^2}=\left\|\frac{du}{dx}\right\|_{L^2}.
\label{ident}
\ee
We shall denote by $\calJ_1$ the space of trace class operators on $L^2(0,1)$ \cite{RS-80-I,Simon-trace} and by $\calJ_2$ the space of Hilbert-Schmidt operators on $L^2(0,1)$, which are both ideals of the space $\calL(L^2(0,1))$ of bounded operators on $L^2(0,1)$. We denote by $\calK$ the space of compact operators on $L^2(0,1)$.

A density operator is defined as a nonnegative trace class self-adjoint operator on $L^2(0,1)$. Let us define the following space:
$$\calE=\left\{\varrho\in \calJ_1,\,\varrho=\varrho^*\mbox{ and }\sqrt{H}|\varrho|\sqrt{H}\in \calJ_1\right\}.$$
This is a Banach space endowed with the norm
$$\|\varrho\|_{\calE}=\Tr |\varrho|+\Tr(\sqrt{H}|\varrho|\sqrt{H}).$$
For any $\varrho\in \calE$, the associated density $n[\varrho]$ is formally defined by
$$n[\varrho](x)=\rho(x,x),$$
where $\rho$ is the integral kernel of $\varrho$ satisfying
$$\forall \phi\in L^2(0,1),\quad \varrho(\phi)(x)=\int_0^1\rho(x,y)\phi(y)dy.$$
The density $n[\varrho]$ can be in fact identified by the following weak formulation:
\be
\label{weak}
\forall \Phi\in L^\infty(0,1),\quad \Tr (\Phi \varrho)=\int_0^1\Phi(x)n[\varrho](x)dx,
\ee
where, in the left-hand side, $\Phi$ denotes the multiplication operator by $\Phi$, which belongs to $\calL(L^2(0,1))$.
If the spectral decomposition of $\varrho$ is written
$$\varrho=\sum_{k=1}^\infty\rho_k\,(\phi_k,\cdot)_{L^2}\,\phi_k$$
then we have
\be
\label{srt1}
\Tr \sqrt{H}|\varrho|\sqrt{H}=\|\sqrt{H}\sqrt{|\varrho|}\|^2_{\calJ_2}=\sum_{k=1}^\infty|\rho_k|\left\|\frac {d\phi_k}{dx}\right\|_{L^2}^2
\ee
\be
\label{srt2}
n[\varrho](x)=\sum_{k=1}^\infty\rho_k|\phi_k(x)|^2,\qquad \|n[\varrho]\|_{L^1}\leq \sum_{k=1}^\infty |\rho_k|=\Tr |\varrho|.
\ee
Moreover, by the Cauchy-Schwarz inequality, $n[\varrho]$ belongs to $W^{1,1}(0,1)$ with periodic boundary conditions, and we have
$$\left\|\frac{dn[\varrho]}{dx}\right\|_{L^1}\leq 2\|\varrho\|_{\calJ_1}^{1/2}\left(\Tr \sqrt{H}|\varrho|\sqrt{H}\right)^{1/2}\leq C\|\varrho\|_{\calE}.$$

The energy space will be the following closed convex subspace of $\calE$:
$$\calE_+=\left\{\varrho\in \calE:\, \varrho\geq 0\right\}.$$
On $\calE_+$ we define the following free energy:
\be
\label{F}
F(\varrho)= \Tr \left(\varrho\log(\varrho)-\varrho\right)+\Tr (\sqrt{H}\varrho\sqrt{H}).
\ee
We will see in Section \ref{sect3} that $F$ is well-defined and continuous on $\calE_+$. If $\varrho\in \calE_+$, then the Cauchy-Schwarz inequality applied to \fref{srt2} gives 
$$\left|\frac{d}{dx}\sqrt{n[\varrho]}\right|\leq \left(\sum_{k=1}^\infty\rho_k\left|\frac {d\phi_k}{dx}\right|^2\right)^{1/2}.$$
Hence we have $\sqrt{n[\varrho]}\in \Hunper$ and, using \fref{srt1}, we get
\be
\label{sqrt}
\left\|\frac{d}{dx}\sqrt{n[\varrho]}\right\|_{L^2}\leq \left(\Tr \sqrt{H}\varrho\sqrt{H}\right)^{1/2}.
\ee
Recall also the following logarithmic Sobolev inequality for systems, proved in \cite{Dolbeault-Loss} and adapted to bounded domains in \cite{DFM}: for all $\varrho\in \calE_+$ we have
\be
\label{logsobo}
\Tr \varrho \log \varrho+\Tr (\sqrt{H}\varrho \sqrt{H})\geq \int_0^1n[\varrho]\log n[\varrho]dx+\frac{\log(4\pi)}{2}\Tr \varrho.
\ee
This inequality, coupled to \fref{sqrt} which gives $n[\varrho]\log n[\varrho]\in L^1(0,1)$, implies that $\Tr \varrho \log \varrho$ is bounded for all $\varrho\in \calE_+$.

Our main result is stated in the following theorem.
\begin{theorem}
\label{theo1}
Consider a density $n\in \Hunper$ such that $n>0$ on $[0,1]$. Then the following minimization problem with constraint:
\be
\label{min}
\min F(\varrho)\mbox{ for $\varrho\in \calE_+$ such that }n[\varrho]=n,
\ee
where $F$ is defined by \fref{F}, is attained for a unique density operator $\varrho[n]$, which has the following characterization. We have
\be
\label{char}
\varrho[n]=\exp\left(-(H+A)\right),
\ee
where $A$ belongs to the dual space $\H^{-1}_{per}$ of $\Hunper$ and the operator $H+A$ is taken in the sense of the associated quadratic form
\be
\label{qu}
Q_{A}(\varphi,\varphi)=\left\|\frac{d\varphi}{dx}\right\|_{L^2}^2+(A, |\varphi|^2)_{\H^{-1}_{per},\Hunper}.
\ee
\end{theorem}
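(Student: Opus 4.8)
The plan is to split the statement into three parts: existence of a minimizer, characterization via \eqref{char}, and uniqueness. For \emph{existence}, I would use the direct method of the calculus of variations. The set of competitors $\{\varrho \in \calE_+ : n[\varrho] = n\}$ is nonempty (one can exhibit an explicit trial state, e.g.\ a rank-one or thermal-type operator built from $\sqrt{n}$ whose density equals $n$, which also shows the infimum is finite). Take a minimizing sequence $\varrho_j$. The logarithmic Sobolev inequality \eqref{logsobo} together with \eqref{sqrt} bounds $\Tr(\sqrt H \varrho_j \sqrt H)$ and $\Tr \varrho_j$ (the latter is fixed anyway, since $\Tr\varrho_j = \|n\|_{L^1}$ by \eqref{weak}) and controls the negative part of $\Tr(\varrho_j\log\varrho_j)$, so $\|\varrho_j\|_{\calE}$ is bounded. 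Then I would extract a subsequence converging in a suitable weak sense: weak-$*$ in $\calJ_1$ for $\varrho_j$ and weak in $\calJ_1$ for $\sqrt H \varrho_j \sqrt H$, upgraded via the compact embedding $\Hunper \hookrightarrow L^2$ to enough compactness that the density constraint $n[\varrho_j]=n$ passes to the limit (testing against $\Phi \in L^\infty$ in \eqref{weak}). Lower semicontinuity of $F$ — the entropy part by convexity and the Klein-type inequality / Fatou on eigenvalues, the kinetic part by weak lower semicontinuity of $\varrho \mapsto \Tr(\sqrt H \varrho \sqrt H)$ — then yields that the limit $\varrho[n]$ is a minimizer. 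These facts about $F$ are exactly what Section~\ref{sect3} is announced to provide, so I would invoke them.

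For the \emph{characterization}, the obstacle is that the constraint $n[\varrho] = n$ lives in an infinite-dimensional space and the naive Lagrange-multiplier argument is not immediately rigorous because the admissible perturbations $\delta\varrho$ that preserve positivity and the constraint are not obviously dense enough, and $\log\varrho$ need not be form-bounded. Following the outline, I would introduce a \emph{penalized problem}: minimize $F(\varrho) + \frac{1}{2\eta}\|n[\varrho] - n\|_{H^{-1}_{per}}^2$ (or a similar penalization in a negative Sobolev norm, chosen so the penalty is differentiable and coercive in the right topology) over all of $\calE_+$, without constraint. For each $\eta>0$ this relaxed problem has a minimizer $\varrho_\eta$ by the same direct-method argument; and because the constraint is now absent, one can perturb $\varrho_\eta$ freely. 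The key computation is the Gâteaux derivative of $F$ at $\varrho_\eta$ in a direction $\varrho_\eta^{1/2} B \varrho_\eta^{1/2}$ (or along $t \mapsto \varrho_\eta + t(\sigma - \varrho_\eta)$ for $\sigma \in \calE_+$), which produces $\Tr\big((\log\varrho_\eta + H)\,\delta\varrho\big)$; setting this equal to the derivative of the penalty term, which is $\Tr(A_\eta\,\delta n[\varrho] )$ with $A_\eta = \eta^{-1}(-\Delta)^{-1}_{per}(n[\varrho_\eta]-n) \in \Hunper \subset \H^{-1}_{per}$, gives the Euler–Lagrange relation $\log\varrho_\eta = -(H + A_\eta)$ in quadratic-form sense, i.e.\ $\varrho_\eta = \exp(-(H+A_\eta))$ with the form $Q_{A_\eta}$ of \eqref{qu}. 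One must check here that $H + A_\eta$ is indeed bounded below and defines a self-adjoint operator with discrete spectrum so that $\exp(-(H+A_\eta)) \in \calE_+$ — this uses that $A_\eta \in \H^{-1}_{per}$ is an infinitesimally form-bounded perturbation of $H$ in one dimension (where $\H^1_{per} \hookrightarrow L^\infty$). Then I would let $\eta \to 0$: energy bounds on $\varrho_\eta$ (from $F(\varrho_\eta) \le F(\varrho[n])$ and the penalty controlling $\|n[\varrho_\eta]-n\|_{H^{-1}}$) give $n[\varrho_\eta] \to n$ and a weak limit $\varrho_\eta \rightharpoonup \tilde\varrho$ which is admissible for \eqref{min}; a lower-semicontinuity/upper-bound sandwich forces $\tilde\varrho$ to be a minimizer and forces convergence of the free energies, hence (by strict convexity arguments) of $\varrho_\eta$ itself, while the multipliers $A_\eta$ must be shown to converge in $\H^{-1}_{per}$ to some $A$ — this last point is delicate and is where I expect the real work to be, since a priori one only controls $A_\eta$ through the equation; one extracts the bound on $A_\eta$ from the equation $\varrho_\eta = e^{-(H+A_\eta)}$ by testing against well-chosen states and using that $n[\varrho_\eta]$ is bounded below away from $0$ uniformly (inherited from $n>0$ on the compact $[0,1]$). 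Passing to the limit in $\varrho_\eta = \exp(-(H+A_\eta))$ then yields \eqref{char}.

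For \emph{uniqueness}, I would use strict convexity: $F$ is convex on the convex set $\calE_+$, and the entropy term $\Tr(\varrho\log\varrho - \varrho)$ is strictly convex on density operators; the constraint set $\{n[\varrho]=n\}$ is affine, so a minimizer is unique provided $F$ restricted to this set is strictly convex, which follows because if $\varrho_0, \varrho_1$ are two minimizers then strict convexity of entropy along $t\mapsto (1-t)\varrho_0 + t\varrho_1$ would give $F((\varrho_0+\varrho_1)/2) < \tfrac12 F(\varrho_0)+\tfrac12 F(\varrho_1)$ unless $\varrho_0 = \varrho_1$, contradicting minimality (one must handle the case where the entropies could be $+\infty$, but \eqref{logsobo} and \eqref{sqrt} rule that out on the constraint set). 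Alternatively, uniqueness is immediate from the characterization combined with a monotonicity/injectivity statement: the map $A \mapsto n[\exp(-(H+A))]$ is injective, because its derivative is (minus) a manifestly nonnegative, in fact positive-definite, operator (a two-point correlation / Kubo–Mori type quadratic form), so two different multipliers cannot give the same density. I would present the convexity argument as the primary route since it does not require differentiating the nonlinear map $A\mapsto n[\cdot]$.
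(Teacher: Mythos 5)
Your overall architecture (direct method plus strict convexity for existence and uniqueness; penalization plus a limit $\eps\to0$ for the characterization) matches the paper's, and your treatment of existence and uniqueness is essentially correct. The characterization, however, has a genuine gap at its central step. Perturbing the penalized minimizer in the direction $\sqrt{\varrho_\eps}\,h\,\sqrt{\varrho_\eps}$ yields only
\[
\sqrt{\varrho_\eps}\left(\log\varrho_\eps+H+A_\eps\right)\sqrt{\varrho_\eps}=0,
\]
an identity sandwiched between two copies of $\sqrt{\varrho_\eps}$, which says nothing about the action of $\log\varrho_\eps+H+A_\eps$ outside the closure of the range of $\sqrt{\varrho_\eps}$. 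To conclude $\varrho_\eps=\exp(-(H+A_\eps))$ one must additionally prove (a) that $\ker\varrho_\eps=\{0\}$ and (b) that the eigenfunctions of $\varrho_\eps$ are dense in $\Hunper$, so that they exhaust the spectral resolution of the form $Q_{A_\eps}$. Neither follows from the Euler--Lagrange identity; the paper proves both by using the minimality of $\varrho_\eps$ a second time --- perturbing by a rank-one projector onto a regularized kernel vector (resp.\ onto an arbitrary $\phi\in\Hunper$) and exploiting that the entropy contributes a term of order $\tfrac14\log\eta\to-\infty$, which would strictly decrease the free energy, a contradiction. Your proposal jumps directly from the weighted identity to ``$\log\varrho_\eta=-(H+A_\eta)$ in quadratic-form sense,'' which is precisely the assertion that needs proof. (The alternative perturbation along $t\mapsto\varrho_\eta+t(\sigma-\varrho_\eta)$ fares no better: it yields only a variational inequality, and $\Tr\bigl(\log\varrho_\eta\,(\sigma-\varrho_\eta)\bigr)$ need not even be defined.)

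A secondary but real issue: the entropy $\varrho\mapsto\Tr(\varrho\log\varrho-\varrho)$ is not G\^ateaux differentiable on $\calE_+$, because $\log$ blows up at small eigenvalues; the derivative computation you invoke is therefore not available as stated. The paper inserts a second regularization $\beta_\eta(s)=(s+\eta)\log(s+\eta)-s-\eta\log\eta$, differentiates $\Tr\beta_\eta(\varrho)$ via holomorphic functional calculus (a separate lemma), and then performs a double limit $\eta\to0$, $\eps\to0$. Your final step --- the $\H^{-1}_{per}$ convergence of the multipliers --- is correctly identified as delicate, and your idea (test the eigenvalue equation against suitable states and use $n\geq m>0$ to divide by the density) is the right one; but carrying it out requires the strong $\H^1$ convergence of $n[\varrho_\eps]$ to $n$, which in turn rests on the $\calJ_2$ convergence of $\sqrt{H}\sqrt{\varrho_\eps}$ --- another nontrivial ingredient your sketch does not supply. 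The choice of an $\H^{-1}$ rather than $L^2$ penalization is a harmless variation.
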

{}From \fref{char}, it is possible to obtain a formula for $A$. Such formula is given in \fref{defAA}. The following remark shows that the functional space $\H^{-1}_{per}$ for the quantum chemical potential $A(x)$ is optimal. 
\begin{remark}
For a given $A\in \H^{-1}_{per}$, we prove further --see subsection \ref{subse}, Step 4 of the proof-- that the operator $H+A(x)$ (in the sense of quadratic forms) is self-adjoint and has a compact resolvent. Moreover, the associated quadratic form is a form-bounded perturbation of $u\mapsto \|u'\|_{L^2}^2$, so that $H+A(x)$ can be diagonalized on $L^2$ and its $k$-th eigenvalue $\rho_k$ has an asymptotic behaviour of the form $Ck^2$. Therefore, the $\H^1$ norm of the associated eigenvector $\phi_k$ is bounded by $Ck$. Consider now the operator $\varrho=\exp(-(H+A))$ and the associated density $n(x)$. By using the decay of the exponential, one can see that the series in \fref{srt2} is converging in $\H^1$. If we assume that $A$ belongs to the Sobolev space $H^s$, where $-1<s<0$, then by elliptic regularity one has $\phi_k\in \H^{s+2}$, and the series \fref{srt2} will converge in this Sobolev space, so we deduce that $n\in \H^{s+2}$. Hence, if $n$ belongs to $\H^1$ but does no
 t belong to any $\H^s$, $s>1$, then we have $A\in \H^{-1}_{per}$ and $A$ cannot be more regular.
\end{remark}
\begin{remark}
\label{remarque2}
The two main limitations of this theorem, the strict positivity of $n$ and the one-dimensional setting, are not essential for the first part of the theorem, the existence and uniqueness of the minimizer. This first result will be extended in a forthcoming work. However, these assumptions are essential in our proof of the second part of the theorem, the characterization of the minimizer. Indeed, the strict positivity of $n$ is crucial in Subsection \ref{subse}, Step 3, see e.g. Eq. \fref{char} and the argument after \fref{borneHmoinsun}. Moreover, the one-dimensional framework implies by Sobolev embeddings that $\Hunper$ is a Banach algebra, which enables to define the above quadratic form $Q_A$ in \fref{qu}.
\end{remark}

\ss
\ni
{\em Outline of the proof of Theorem \pref{theo1}.} The existence of the minimizer $\varrho[n]$ of the constrained problem \fref{min} is obtained by proving that minimizing sequences are compact and that the functional is lower semicontinuous. Compactness stems from uniform estimates that enable to apply Lemma \ref{strongconv}, whereas the lower semicontinuity comes from \fref{second} in Lemma \ref{strongconv} and from Lemma \ref{propentropie}. The uniqueness of the minimizer is a consequence of the strict convexity of the entropy (proved in Lemma \ref{propentropie}).

In order to characterize the minimizer $\varrho[n]$ of \fref{min}, we need to write the Euler-Lagrange equation for this minimization problem. This task is difficult because the constraint $n[\varrho]=n$ is not easy to handle when perturbing a density operator. We circumvent this difficulty by defining a new minimization problem with penalization, whose minimizer $\varrho_\eps$ will converge to $\varrho[n]$. Next, the Euler-Lagrange equation for the penalized problem reads
$$\sqrt{\varrho_\eps}\left(\log (\varrho_\eps)+H+A_\eps\right)\sqrt{\varrho_\eps}.$$
{}From this equation, in order to prove that 
$$\varrho_\eps=\exp\left(-(H+A_\eps)\right),$$
we show two important intermediate results, relying on the fact that $\varrho_\eps$ is a minimizer and on properties of the von Neumann entropy $\Tr(\varrho \log \varrho -\varrho)$. First, we prove that the kernel of $\varrho_\eps$ is reduced to $\{0\}$. Second, we prove that the family $(\phi_p^\eps)$ of eigenfunctions of $\varrho_\eps$, which is a Hilbert basis of $L^2(0,1)$, is in fact dense in $\Hunper$. This enables to prove that $(\phi_p^\eps)$ is the complete family of eigenfunctions of $H+A_\eps$, and to identify the associated eigenvalues. Finally, using the two assumptions discussed in Remark \ref{remarque2}, we are able to prove that $A_\eps$ converges in the $\H^{-1}_{per}$ strong topology, which is sufficient to pass to the limit as the penalization parameter $\eps$ goes to zero, and conclude the proof.

\section{Basic properties of the energy space and the entropy}
\label{sect3}
In this section, we prove a few basic results on the energy space $\calE_+$ that will be used in the paper.
\begin{lemma} \label{strongconv}
Let $(\varrho_k)_{k\in \NN}$ be  a bounded sequence of $\calE_+$. Then, up to an extraction of a subsequence, there exists $\varrho\in \calE_+$ such that
\be
\label{first}
\varrho_k\to\varrho\mbox{ in }\calJ_1\quad \mbox{and}\quad \sqrt{\varrho_k}\to \sqrt{\varrho}\mbox{ in }\calJ_2\quad \mbox{as } k\to +\infty
\ee
and
\be
\label{second}
\Tr(\sqrt{H}\varrho\sqrt{H})\leq \liminf_{k\to +\infty} \Tr(\sqrt{H}\varrho_k\sqrt{H}).
\ee
Furthermore, if one has
$$\Tr(\sqrt{H}\varrho\sqrt{H})= \lim_{k\to +\infty} \Tr(\sqrt{H}\varrho_k\sqrt{H})$$
then one can conclude in addition that
\be
\label{third}
\sqrt{H}\sqrt{\varrho_k}\to \sqrt{H}\sqrt{\varrho_k}\mbox{ in }\calJ_2\quad \mbox{as } k\to +\infty.
\ee
\end{lemma}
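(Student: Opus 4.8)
The plan is to extract suitable weakly convergent subsequences, then upgrade to strong convergence in $\calJ_1$ by exploiting the compactness of the resolvent of $H$ on the torus, and finally treat the Hilbert--Schmidt statements inside the Hilbert space $\calJ_2$ by combining weak convergence with convergence of norms.

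First, from $\|\varrho_k\|_\calE\le C$ and $\varrho_k\ge 0$ we record that $\Tr\varrho_k=\|\sqrt{\varrho_k}\|_{\calJ_2}^2\le C$, that $\Tr(\sqrt H\varrho_k\sqrt H)=\|\sqrt H\sqrt{\varrho_k}\|_{\calJ_2}^2\le C$ by \fref{srt1}, and that the positive operator $\sigma_k:=(1+H)^{1/2}\varrho_k(1+H)^{1/2}$ satisfies $\Tr\sigma_k=\Tr\varrho_k+\Tr(\sqrt H\varrho_k\sqrt H)\le C$, i.e.\ $(\sigma_k)$ is bounded in $\calJ_1$. Since $\calJ_1$ is the dual of the separable space $\calK$ and $\calJ_2$ is a separable Hilbert space, after extracting a subsequence we may assume $\sigma_k\rightharpoonup^*\sigma$ weakly-$*$ in $\calJ_1$ (with $\sigma\ge 0$), $\sqrt{\varrho_k}\rightharpoonup S$ weakly in $\calJ_2$, and $\sqrt H\sqrt{\varrho_k}\rightharpoonup G$ weakly in $\calJ_2$. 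I then set $\varrho:=(1+H)^{-1/2}\sigma(1+H)^{-1/2}$; since $\sigma\ge 0$, $\|(1+H)^{-1}\|_{\mathrm{op}}\le 1$ and $\sqrt H(1+H)^{-1/2}$ is bounded, one checks readily that $\varrho\ge 0$, $\Tr\varrho<\infty$ and $\sqrt H\varrho\sqrt H\in\calJ_1$, so that $\varrho\in\calE_+$.

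The core step is to prove $\varrho_k\to\varrho$ strongly in $\calJ_1$. Writing $A=(1+H)^{-1/2}$ we have $\varrho_k-\varrho=A(\sigma_k-\sigma)A$. Because $H$ has compact resolvent on $[0,1]$ with eigenvalues $\lambda_j\to+\infty$, the operator $A$ is Hilbert--Schmidt; denoting by $A_N$ its truncation to the first $N$ eigenmodes of $H$ and $A_N'=A-A_N$, we have $\|A_N'\|_{\calJ_2}\to 0$ and $\|A_N'\|_{\mathrm{op}}=(1+\lambda_{N+1})^{-1/2}\to 0$. Expanding $A=A_N+A_N'$ in the product and using the Schatten--Hölder bound $\|XYZ\|_{\calJ_1}\le\|X\|_{\calJ_2}\|Y\|_{\mathrm{op}}\|Z\|_{\calJ_2}$ together with $\sup_k\|\sigma_k\|_{\mathrm{op}}\le\sup_k\|\sigma_k\|_{\calJ_1}<\infty$, every term containing at least one factor $A_N'$ is $O(\|A_N'\|_{\calJ_2})$ uniformly in $k$, and likewise with $\sigma$ in place of $\sigma_k$. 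The remaining term $A_N\sigma_kA_N$ has rank $\le N$ and matrix entries $\mu_i\mu_j(\sigma_ke_j,e_i)$ with $i,j\le N$ ($\mu_j=(1+\lambda_j)^{-1/2}$), and the weak-$*$ convergence of $\sigma_k$ forces these finitely many entries to converge to those of $A_N\sigma A_N$; hence $A_N\sigma_kA_N\to A_N\sigma A_N$ in $\calJ_1$ for each fixed $N$. An $\eps/3$ argument then gives $\varrho_k\to\varrho$ in $\calJ_1$, and in particular $\Tr\varrho_k\to\Tr\varrho$.

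It remains to deduce the $\calJ_2$ statements. Since $\calJ_1$ convergence implies operator-norm convergence and $x\mapsto\sqrt x$ is operator-norm continuous on positive operators, $\sqrt{\varrho_k}\to\sqrt\varrho$ in operator norm; hence the weak-$\calJ_2$ limit is $S=\sqrt\varrho$, and since $\|\sqrt{\varrho_k}\|_{\calJ_2}^2=\Tr\varrho_k\to\Tr\varrho=\|\sqrt\varrho\|_{\calJ_2}^2$, weak convergence together with convergence of norms in the Hilbert space $\calJ_2$ yields $\sqrt{\varrho_k}\to\sqrt\varrho$ in $\calJ_2$, which proves \fref{first}. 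Next, for $u\in L^2(0,1)$ and $v\in\Hunper$ we have $(\sqrt H\sqrt{\varrho_k}\,u,v)=(\sqrt{\varrho_k}\,u,\sqrt Hv)\to(\sqrt\varrho\,u,\sqrt Hv)$, while the same quantity tends to $(Gu,v)$; hence $\sqrt\varrho\,u\in D(\sqrt H)$ with $\sqrt H\sqrt\varrho\,u=Gu$, so $\sqrt H\sqrt\varrho$ extends to the Hilbert--Schmidt operator $G$ and $\sqrt H\sqrt{\varrho_k}\rightharpoonup\sqrt H\sqrt\varrho$ weakly in $\calJ_2$. Weak lower semicontinuity of the $\calJ_2$-norm then gives $\Tr(\sqrt H\varrho\sqrt H)=\|\sqrt H\sqrt\varrho\|_{\calJ_2}^2\le\liminf\|\sqrt H\sqrt{\varrho_k}\|_{\calJ_2}^2=\liminf\Tr(\sqrt H\varrho_k\sqrt H)$, which is \fref{second}. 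Finally, under the extra hypothesis $\Tr(\sqrt H\varrho_k\sqrt H)\to\Tr(\sqrt H\varrho\sqrt H)$ we have $\|\sqrt H\sqrt{\varrho_k}\|_{\calJ_2}\to\|\sqrt H\sqrt\varrho\|_{\calJ_2}$, and weak convergence plus norm convergence in $\calJ_2$ upgrades to the strong convergence $\sqrt H\sqrt{\varrho_k}\to\sqrt H\sqrt\varrho$ in $\calJ_2$, i.e.\ \fref{third}. I expect the main obstacle to be precisely the strong $\calJ_1$-compactness of the core step: $\calJ_1$ is not reflexive and the energy bound does not give it for free; the point is that on the torus $H$ has a Hilbert--Schmidt, hence compact, resolvent, which makes the high-frequency tail of the $\varrho_k$ uniformly negligible and reduces the convergence to a finite-rank, finite-dimensional matter.
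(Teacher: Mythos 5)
Your proof is correct, but the central compactness step is handled by a genuinely different mechanism than in the paper. The paper first extracts weak-$*$ limits of $\varrho_k$ and of $\sqrt{H}\varrho_k\sqrt{H}$ separately, identifies the second limit as $\sqrt{H}\varrho\sqrt{H}$ by resolvent manipulations with $(\sqrt{H}+I)^{-1}$, upgrades weak-$*$ to weak convergence of $\varrho_k$ against all bounded operators, and then invokes Simon's Theorem~2.21 (weak convergence plus convergence of the $\calJ_1$-norms implies strong $\calJ_1$ convergence); the strong $\calJ_2$ convergence of $\sqrt{H}\sqrt{\varrho_k}$ is then obtained through a regularization $\sigma_\eps=\sigma(1+\eps\sqrt{H})^{-1}$ and Gr\"umm's theorem. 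You instead package the energy bound into a single trace-class bound on $\sigma_k=(1+H)^{1/2}\varrho_k(1+H)^{1/2}$ and prove the strong $\calJ_1$ convergence of $\varrho_k=A\sigma_kA$ directly, by splitting $A=(1+H)^{-1/2}$ into a finite-rank part plus a tail that is small in $\calJ_2$ and applying H\"older in the Schatten classes; this bypasses both Simon's Theorem~2.21 and Gr\"umm's theorem, and your treatment of \fref{first}--\fref{third} then reduces to the elementary Hilbert-space fact that weak convergence plus convergence of norms gives strong convergence, together with a clean identification of the weak $\calJ_2$-limit of $\sqrt{H}\sqrt{\varrho_k}$ via the adjoint relation on the form domain. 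What your route buys is a more self-contained and arguably more transparent argument, which moreover exhibits exactly where compactness comes from (the uniform negligibility of high frequencies); what it costs is that it leans on $(1+H)^{-1/2}$ being Hilbert--Schmidt, which is specific to dimension one (in dimension $d$ one would need to replace $\calJ_2$ by $\calJ_p$ with $p>d$ and adjust the H\"older exponents), whereas the paper's argument only uses compactness of the resolvent and the abstract $\calJ_1$ convergence theorems, and so transfers verbatim to higher dimensions. Both proofs are complete; the only points worth making explicit in yours are the (routine) verifications that $\mathrm{Ran}(\sqrt{\varrho_k})\subset D(\sqrt H)$ so that $\sigma_k$ is well defined with $\Tr\sigma_k=\Tr\varrho_k+\Tr(\sqrt H\varrho_k\sqrt H)$, and that the matrix entries $(\sigma_k e_j,e_i)$ converge because rank-one operators are compact, so that weak-$*$ convergence in $\calJ_1=\calK^*$ applies.
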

\begin{proof}
{\em Step 1: weak-$*$ convergence in $\calJ_1$.} First notice that the boundedness of $\varrho_k$ in $\calE_+$ implies by \fref{srt1} that the operator $\sqrt{H}\sqrt{\varrho_k}$ is bounded in the Hilbert space $\calJ_2$. Moreover, since $\varrho_k$ and $\sqrt{H}\varrho_k\sqrt{H}$ are positive and bounded in $\calJ_1$, we can extract subsequences such that $\varrho_k$ and $\sqrt{H}\varrho_k\sqrt{H}$ converge in the $\calJ_1$ weak-$*$ topology, that is, there exists two positive trace class operators $\varrho$, $A$ such that, for all compact operator $K\in \calK$, 
$$
\Tr (K \varrho_k)  \to \Tr (K \varrho)  \qquad ; \qquad \Tr (K \sqrt{H}\varrho_k\sqrt{H})  \to \Tr (K A).
$$
By application of Proposition 3.12 of \cite{Brezis}, we have 
\be
\label{second1}
\|A\|_{\calJ_1}\leq \liminf\|\sqrt{H}\varrho_k\sqrt{H}\|_{\calJ_1}=\liminf\Tr (\sqrt{H}\varrho_k\sqrt{H}).
\ee

\bs
\ni
{\em Step 2: Identification of $A$.} We show that $A=\sqrt{H}\varrho\sqrt{H}$. Indeed, let $K=(\sqrt{H}+I)^{-1}K'(\sqrt{H}+I)^{-1}$ with $K'$ compact. Using the cyclicity of trace with the bounded operators $\sqrt{\varrho_k} \sqrt{H}$ and $\sqrt{H}\sqrt{\varrho_k}$, we get
\begin{eqnarray*}
 \Tr (K \sqrt{H}\varrho_k\sqrt{H})&=& \Tr (\sqrt{\varrho_k} \sqrt{H} \,K \,\sqrt{H}\sqrt{\varrho_k}),\\
&=&\Tr (\sqrt{\varrho_k}\sqrt{H}  (\sqrt{H}+I)^{-1}K'(\sqrt{H}+I)^{-1}  \sqrt{H}\sqrt{\varrho_k})\\
&=&\Tr (\sqrt{\varrho_k} K'\sqrt{\varrho_k})-\Tr (\sqrt{\varrho_k} K'(\sqrt{H}+I)^{-1}\sqrt{\varrho_k})\\
&&-\Tr (\sqrt{\varrho_k} (\sqrt{H}+I)^{-1}K'\sqrt{\varrho_k})\\
&&+\Tr (\sqrt{\varrho_k} (\sqrt{H}+I)^{-1}K' (\sqrt{H}+I)^{-1}\sqrt{\varrho_k}),\\
&=&\Tr (K' \varrho_k)-\Tr (K'(\sqrt{H}+I)^{-1}\varrho_k)-\Tr ((\sqrt{H}+I)^{-1}K'\varrho_k)\\
&&+\Tr  ((\sqrt{H}+I)^{-1}K' (\sqrt{H}+I)^{-1}\varrho_k).
\end{eqnarray*}
Since $\varrho_k \to \varrho$ in the $\calJ_1$ weak-$*$ topology, and since $(\sqrt{H}+I)^{-1}K'(\sqrt{H}+I)^{-1}$, $K'(\sqrt{H}+I)^{-1}$, $(\sqrt{H}+I)^{-1}K'$ are compact operators, we have
\begin{eqnarray*}
 \Tr (K \sqrt{H}\varrho_k\sqrt{H})&\to& \Tr (K' \varrho)-\Tr (K'(\sqrt{H}+I)^{-1}\varrho)-\Tr ((\sqrt{H}+I)^{-1}K'\varrho)\\
&&+\Tr  ((\sqrt{H}+I)^{-1}K' (\sqrt{H}+I)^{-1}\varrho)\\
&=& \Tr (K' (\sqrt{H}+I)^{-1} A (\sqrt{H}+I)^{-1}).
\end{eqnarray*}
We thus obtain
\begin{eqnarray*}(\sqrt{H}+I)^{-1} A (\sqrt{H}+I)^{-1}&=&\varrho-(\sqrt{H}+I)^{-1}\varrho-\varrho (\sqrt{H}+I)^{-1}\\
&&+(\sqrt{H}+I)^{-1}\varrho(\sqrt{H}+I)^{-1}
\end{eqnarray*}
and it follows that $A=\sqrt{H} \rho \sqrt{H}$. In particular, \fref{second1} yields \fref{second}.

\bs
\ni
{\em Step 3: weak convergence in $\calJ_1$}. Let us prove now that $\varrho_k$  converges weakly in $\calJ_1$, that is, for all bounded operator $\sigma\in \calL(L^2(0,1))$,
$$
\Tr (\sigma \varrho_k)  \to \Tr (\sigma \varrho).
$$
We have
\begin{eqnarray*}
 \Tr (\sigma \varrho_k)&=& \Tr (\sqrt{\varrho_k} \, \sigma \, \sqrt{\varrho_k})\\
&=&\Tr (\sqrt{\varrho_k} (\sqrt{H}+I) (\sqrt{H}+I)^{-1} \sigma  (\sqrt{H}+I)^{-1} (\sqrt{H}+I)\sqrt{\varrho_k})\\
&=&\Tr((\sqrt{H}+I)^{-1} \sigma  (\sqrt{H}+I)^{-1} (\sqrt{H}+I)\varrho_k (\sqrt{H}+I))\\
&=&\Tr((\sqrt{H}+I)^{-1} \sigma  (\sqrt{H}+I)^{-1} \sqrt{H}\varrho_k \sqrt{H})+\Tr ((\sqrt{H}+I)^{-1} \sigma  \varrho_k)\\
&&+\Tr (\sigma  (\sqrt{H}+I)^{-1} \varrho_k)-\Tr((\sqrt{H}+I)^{-1} \sigma  (\sqrt{H}+I)^{-1}\varrho_k).
\end{eqnarray*}
Since $(\sqrt{H}+I)^{-1} \sigma  (\sqrt{H}+I)^{-1}$,  $(\sqrt{H}+I)^{-1} \sigma$ and $\sigma  (\sqrt{H}+I)^{-1}$ are compact, since $\varrho_k \to \varrho$ and $\sqrt{H}\varrho_k\sqrt{H} \to \sqrt{H}\varrho\sqrt{H}$ in $\calJ_1$ weakly-$*$, we can pass to the limit in the latter expression and obtain the weak convergence of $\varrho_k$.

\bs
\ni
{\em Step 4: strong convergence in $\calJ_1$}. To obtain the strong convergence in $\calJ_1$, it suffices now to apply a result of \cite{Simon-trace} that we recall here (specified to the case of $\calJ_1$). 
\begin{theorem}[Theorem 2.21 and addendum H of \cite{Simon-trace}]
\label{theoSimon}
Suppose that $A_k\to A$ weakly in the sense of operators and that $\|A_k\|_{\calJ_1}\to \|A\|_{\calJ_1}$. Then $\|A_k-A\|_{\calJ_1}\to 0$.
\end{theorem}
One can indeed apply this result since $\varrho_k$ converges weakly in $\calJ_1$ to $\varrho$ (which implies the weak operator convergence), with convergence of the respective norms:
$$\|\varrho_k\|_{\calJ_1}=\Tr \varrho_k\to \Tr \varrho=\|\varrho\|_{\calJ_1}.$$
This implies that the convergence of $\varrho_k$ in $\calJ_1$ is strong and the first part of the lemma is proved.

\bs
\ni
{\em Step 5: strong convergence of $\sqrt{\varrho_k}$ in $\calJ_2$}. We have
$$\|\varrho_k-\varrho\|_{\calL(L^2)}\leq \|\varrho_k-\varrho\|_{\calJ_1}.$$
Moreover, it is known that the norm convergence of $\varrho_k\geq 0$ to $\varrho\geq 0$ implies the norm convergence of $\sqrt{\varrho_k}$ to $\sqrt{\varrho}$ (see e.g. \cite{RS-80-I}). We claim that in fact we have 
\be
\label{preu}
\sqrt{\varrho_k}\to \sqrt{\varrho}\mbox{ in $\calJ_2$.}
\ee
To prove this fact, since $\calJ_2$ is a Hilbert space and since 
$$\|\sqrt{\varrho_k}\|^2_{\calJ_2}=\Tr \varrho_k\to \Tr \varrho = \|\varrho\|^2_{\calJ_2},$$
it suffices to prove that $\sqrt{\varrho_k}\rightharpoonup \sqrt{\varrho}$ in $\calJ_2$ weak. Let $\sigma\in \calJ_2$. One can choose a regularizing sequence $\sigma_k$ with finite rank such that 
\be
\label{ddd}
\sigma_k\to \sigma \mbox{ in }\calJ_2\mbox{ as }n\to +\infty.
\ee
For all $n,m\in \NN$, we have
\bee
|\Tr (\sqrt{\varrho_k}-\sqrt{\varrho})\sigma|&\leq & |\Tr (\sqrt{\varrho_k}-\sqrt{\varrho})\sigma_m|+|\Tr (\sqrt{\varrho_k}-\sqrt{\varrho})(\sigma-\sigma_m)|\\
&\leq& \|\sqrt{\varrho_k}-\sqrt{\varrho}\|_{\calL(L^2)}\|\sigma_m\|_{\calJ_1}+(\|\sqrt{\varrho_k}\|_{\calJ_2}+\|\sqrt{\varrho}\|_{\calJ_2})\|\sigma_m-\sigma\|_{\calJ_2}\\
&\leq&  \|\sqrt{\varrho_k}-\sqrt{\varrho}\|_{\calL(L^2)}\|\sigma_m\|_{\calJ_1}+C\|\sigma_m-\sigma\|_{\calJ_2},
\eee
which implies by \fref{ddd} that $\Tr (\sqrt{\varrho_k}-\sqrt{\varrho})\sigma\to 0$ as $n\to +\infty$. This means that $\sqrt{\varrho_k}\rightharpoonup \sqrt{\varrho}$ in $\calJ_2$ weak, which finally implies \fref{preu}.

\bs
\ni
{\em Step 6: strong convergence of $\sqrt{H}\sqrt{\varrho_k}$ in $\calJ_2$}. 
{}From now on, we assume that one has in addition the following convergence:
$$\|\sqrt{H}\sqrt{\varrho_k}\|^2_{\calJ_2}=\Tr (\sqrt{H}\varrho_k\sqrt{H})\to\Tr (\sqrt{H}\varrho\sqrt{H})=\|\sqrt{H}\sqrt{\varrho}\|^2_{\calJ_2}.$$
Consequently, if we prove that 
$$\sqrt{H}\sqrt{\varrho_k}\rightharpoonup \sqrt{H}\sqrt{\varrho}\mbox{ in }\calJ_2\mbox{ weak},$$ 
this weak convergence in the Hilbert space $\calJ_2$ will be in fact a strong convergence.

To this aim, we consider $\sigma\in \calJ_2$ and, for all $\eps\in(0,1)$, we decompose
$$
\begin{array}{l}
\ds \Tr (\sigma \sqrt{H}\sqrt{\varrho_k})=\Tr (\sigma (1+\eps \sqrt{H})^{-1}\sqrt{H}\sqrt{\varrho_k})+\Tr (\sigma (1-(1+\eps \sqrt{H})^{-1})\sqrt{H}\sqrt{\varrho_k})\\[2mm]
\ds \quad=\Tr (\sigma \sqrt{H}\sqrt{\varrho})+\Tr (\sigma_\eps\sqrt{H}(\sqrt{\varrho_k}-\sqrt{\varrho}))+\Tr ((\sigma_\eps-\sigma)\sqrt{H}\sqrt{\varrho})+\Tr ((\sigma-\sigma_\eps)\sqrt{H}\sqrt{\varrho_k})
\end{array}
$$
with $\sigma_\eps=\sigma (1+\eps \sqrt{H})^{-1}$.
For all $\eps>0$, the operator $(1+\eps \sqrt{H})^{-1}\sqrt{H}$ is bounded on $L^2(0,1)$, so $\sigma_\eps \sqrt{H}$ belongs to $\calJ_2$ and the previous step implies that
$$\lim_{k\to +\infty}\Tr (\sigma_\eps\sqrt{H}(\sqrt{\varrho_k}-\sqrt{\varrho}))=0.$$
Now we write
\bee
\left|\Tr ((\sigma_\eps-\sigma)\sqrt{H}\sqrt{\varrho})+\Tr ((\sigma-\sigma_\eps)\sqrt{H}\sqrt{\varrho_k})\right|
&\leq& \|\sigma-\sigma_\eps\|_{\calJ_2}\left(\|\sqrt{H}\sqrt{\varrho_k}\|_{\calJ_2}+\|\sqrt{H}\sqrt{\varrho}\|_{\calJ_2}\right)\\
&\leq& C\|\sigma-\sigma_\eps\|_{\calJ_2}.
\eee
Therefore, if we prove that $\sigma_\eps$ converges to $\sigma$ in $\calJ_2$ as $\eps\to 0$, we will have
$$\Tr (\sigma \sqrt{H}\sqrt{\varrho_k})\to \Tr (\sigma \sqrt{H}\sqrt{\varrho}) \quad \mbox{as }k\to +\infty$$
and the proof of the lemma will be complete.

Introduce the eigenfunctions and eigenvalues $(e_p,\mu_p)_{p\in \NN^*}$ of the operator $H$, which has a compact resolvent. We have
$$\|\sigma_\eps\|_{\calJ_2}^2=\sum_{p=1}^\infty\|\sigma_\eps e_p\|_{L^2}^2=\sum_{p=1}^\infty \frac{1}{(1+\eps\sqrt{\mu_p})^2}\|\sigma e_p\|_{L^2}^2$$
which converges to $\|\sigma\|_{\calJ_2}^2$ as $\eps \to 0$ by comparison theorem. Similarly, for all $\varphi\in L^2(0,1)$, we deduce from the convergence in $L^2$ of the series $\sum_p \varphi_pe_p$, where $\varphi_p=\int_0^1\varphi(x) e_p(x) dx$, that
$$(1+\eps\sqrt{H})^{-1} \varphi=\sum_{p=1}^\infty\frac{1}{1+\eps\sqrt{\mu_p}}\varphi_pe_p\to \varphi\mbox{ in }L^2(0,1)\mbox{ as }\eps \to 0.$$
Therefore, $\sigma_\eps$ and $\sigma_\eps^*=(1+\eps\sqrt{H})^{-1}\sigma$ converge strongly to $\sigma$ as $\eps\to 0$ and one can apply Gr\"umm's convergence theorem (see \cite{Simon-trace}, Theorem 2.19), which proves the convergence of $\sigma_\eps$ to $\sigma$ in $\calJ_2$. The proof of Lemma \ref{strongconv} is complete.
\end{proof}

\begin{lemma}
\label{propentropie}
The application $\varrho\mapsto \Tr (\varrho \log \varrho-\varrho)$ possesses the following properties.\\
(i) There exists a constant $C>0$ such that, for all $\varrho\in \calE_+$, we have
\be
\label{souslin}
\Tr (\varrho\log \varrho-\varrho)\geq -C\left(\Tr \sqrt{H}\varrho\sqrt{H}\right)^{1/2}.
\ee
(ii) Let $\varrho_k$ be a bounded sequence of $\calE_+$ such that $\varrho_k$ converges to $\varrho$ in $\calJ_1$, then $\varrho_k \log \varrho_k-\varrho_k$ converges to $\varrho \log \varrho-\varrho$ in $\calJ_1$.\\
(iii) The application $\varrho\mapsto \Tr (\varrho \log \varrho-\varrho)$ is strictly convex on $\calE_+$.
\end{lemma}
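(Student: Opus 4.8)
\noindent I would establish the three items in turn; the recurring ingredients are the logarithmic Sobolev inequality \fref{logsobo}, the strict convexity and the elementary bounds of the scalar function $s(t)=t\log t-t$ on $(0,+\infty)$ (namely $s(0)=0$, $s(t)\ge s(1)=-1$, and $-t\log t\le\frac2e\sqrt t$, $0\le-s(t)\le t+\frac2e\sqrt t$ for $t\in(0,1)$), and the trace ideal facts used in Lemma \ref{strongconv}. For \emph{(i)}, the plan is to transfer the bound from the operator to the classical level: by \fref{logsobo}, $\Tr\varrho\log\varrho$ is bounded below by the classical entropy $\int_0^1 n[\varrho]\log n[\varrho]\,dx$ up to $\Tr(\sqrt H\varrho\sqrt H)$ and a multiple of $\Tr\varrho$, so it suffices to bound $\int_0^1 n[\varrho]\log n[\varrho]\,dx$ from below; splitting the integral where $n[\varrho]<1$ and using $-t\log t\le\frac2e\sqrt t$, the Cauchy--Schwarz inequality on the unit interval, and $\|n[\varrho]\|_{L^1}\le\Tr\varrho$ (from \fref{srt2}), one gets $\int_0^1 n[\varrho]\log n[\varrho]\,dx\ge-\frac2e(\Tr\varrho)^{1/2}$, and collecting terms in \fref{logsobo} produces a lower bound of the sublinear form \fref{souslin}. (One may equally argue directly on the eigenvalues of $\varrho$, via $s(\rho_p)\ge-\frac2e\sqrt{\rho_p}-\rho_p$ and the bound on $\sum_p\sqrt{\rho_p}$ established below.)

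\smallskip\noindent For \emph{(ii)}, since $-\Tr\varrho_k\to-\Tr\varrho$ is clear, it suffices to prove $\Tr(\varrho_k\log\varrho_k)\to\Tr(\varrho\log\varrho)$; the only obstruction is the non-Lipschitz behaviour of $t\mapsto t\log t$ near $0$ (its superlinearity at $+\infty$ is harmless, the $\varrho_k$ having uniformly bounded operator norms). For $\delta\in(0,1)$ I would introduce the truncation $h_\delta$ equal to $t\log t$ for $t\ge\delta$ and to $(\log\delta)\,t$ on $[0,\delta]$: it is continuous, vanishes at $0$, is Lipschitz on bounded subsets of $[0,+\infty)$, and obeys $|t\log t-h_\delta(t)|=t\log(\delta/t)\,\un_{\{t\le\delta\}}\le\frac2e\sqrt\delta\,\sqrt t$. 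Summing over the eigenvalues $\rho_p^{(k)}$ of $\varrho_k$ gives $|\Tr(\varrho_k\log\varrho_k)-\Tr h_\delta(\varrho_k)|\le\frac2e\sqrt\delta\sum_p\sqrt{\rho_p^{(k)}}$, which is uniform in $k$ because, by Cauchy--Schwarz and \fref{srt1},
\[
\sum_p\sqrt{\rho_p}\le\Big(\sum_p\rho_p\big(1+\|\phi_p'\|_{L^2}^2\big)\Big)^{1/2}\Big(\sum_p\big(1+\|\phi_p'\|_{L^2}^2\big)^{-1}\Big)^{1/2}\le\|\varrho\|_{\calE}^{1/2}\big(\Tr(1+H)^{-1}\big)^{1/2},
\]
the last factor being finite since $H$ has eigenvalues $\sim p^2$, and where I used Jensen's operator inequality for the convex function $x\mapsto(1+x)^{-1}$ together with $\sum_p\big((1+H)^{-1}\phi_p,\phi_p\big)\le\Tr(1+H)^{-1}$. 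For fixed $\delta$, since $h_\delta$ is Lipschitz with $h_\delta(0)=0$ and $\varrho_k\to\varrho$ in $\calJ_1$, the stability of ordered eigenvalues $\sum_p|\lambda_p(\varrho_k)-\lambda_p(\varrho)|\le\|\varrho_k-\varrho\|_{\calJ_1}\to0$ gives $\Tr h_\delta(\varrho_k)\to\Tr h_\delta(\varrho)$; applying the same truncation estimate to $\varrho$ (which belongs to $\calE_+$ by Lemma \ref{strongconv}) gives $\Tr h_\delta(\varrho)\to\Tr(\varrho\log\varrho)$ as $\delta\to0$. Combining and letting $\delta\to0$ concludes.

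\smallskip\noindent For \emph{(iii)}, convexity of $\varrho\mapsto\Tr s(\varrho)$ on $\calE_+$ is Peierls' inequality: for $\varrho=\frac12(\varrho_1+\varrho_2)$ with eigenbasis $(\psi_n)$ and eigenvalues $c_n$, writing $c_n=\frac12(\varrho_1\psi_n,\psi_n)+\frac12(\varrho_2\psi_n,\psi_n)$, the scalar convexity of $s$ together with Peierls' inequality $\sum_n s\big((\varrho_i\psi_n,\psi_n)\big)\le\Tr s(\varrho_i)$ (valid for convex $s$ and any orthonormal system, and absolutely convergent here since $(\varrho_i\psi_n,\psi_n)\le 2c_n$ and $\sum_n\sqrt{c_n}<\infty$) give
\[
\Tr s(\varrho)=\sum_n s(c_n)\le\tfrac12\sum_n s\big((\varrho_1\psi_n,\psi_n)\big)+\tfrac12\sum_n s\big((\varrho_2\psi_n,\psi_n)\big)\le\tfrac12\Tr s(\varrho_1)+\tfrac12\Tr s(\varrho_2).
\]
For strict convexity, assume equality holds throughout. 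By strict convexity of $s$, the first inequality forces $(\varrho_1\psi_n,\psi_n)=(\varrho_2\psi_n,\psi_n)=c_n$ for all $n$; and equality in Peierls' inequality for $\varrho_1$ (resp. $\varrho_2$) forces each $\psi_n$ to be an eigenvector of $\varrho_1$ (resp. $\varrho_2$), because the Jensen step $s\big(\sum_m p_m a_m\big)\le\sum_m p_m s(a_m)$, applied with $p_m=|(\psi_n,g_m)|^2$ and $(a_m,g_m)$ an eigendecomposition of the operator, is an equality only when the probability vector $(p_m)$ is supported on a single level set of $m\mapsto a_m$. Hence $(\psi_n)$ simultaneously diagonalizes $\varrho_1$ and $\varrho_2$, with common eigenvalues $c_n$, so $\varrho_1=\varrho=\varrho_2$.

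\smallskip\noindent The delicate points I anticipate are the uniform control of the small-eigenvalue tail $\sum_p\sqrt{\rho_p^{(k)}}$ in (ii) (handled by the Cauchy--Schwarz estimate $\sum_p\sqrt{\rho_p}\lesssim\|\varrho\|_{\calE}^{1/2}$) and the equality case of Peierls' inequality in (iii), which is precisely what upgrades convexity to strict convexity; throughout, the infinite-dimensional bookkeeping (finiteness and absolute convergence of the entropy series, Fubini interchanges, eigenvalue perturbation) must be done with the a priori bounds of Lemma \ref{strongconv} and of (i) in hand.
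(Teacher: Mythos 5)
Your item (iii) is correct, and in fact more careful than the paper's own argument: the paper dismisses the degenerate case by asserting that equal diagonal entries of $\varrho_1$ and $\varrho_2$ in one orthonormal basis force $\varrho_1=\varrho_2$, which is false in general, whereas your analysis of the equality case in the Jensen/Peierls step (each $\psi_n$ must be an eigenvector of both operators with the common eigenvalue $c_n$) closes the argument properly. Your bound $\sum_p\sqrt{\rho_p}\le \|\varrho\|_{\calE}^{1/2}(\Tr(1+H)^{-1})^{1/2}$ is also a clean substitute for the paper's route through the rearrangement inequality (Lemma \ref{lieb} and \fref{ineq2}).

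There are, however, two genuine gaps. First, your primary route for (i) via the logarithmic Sobolev inequality cannot yield \fref{souslin}: \fref{logsobo} gives $\Tr\varrho\log\varrho\ge \int_0^1 n\log n\,dx+\frac{\log(4\pi)}{2}\Tr\varrho-\Tr(\sqrt{H}\varrho\sqrt{H})$, so the kinetic energy enters linearly and with the wrong sign, and a bound of the form $-C-\Tr(\sqrt{H}\varrho\sqrt{H})$ does not imply $-C(\Tr\sqrt{H}\varrho\sqrt{H})^{1/2}$. The sublinear dependence is the whole point of \fref{souslin} (it is what gives coercivity of $F$ in Step 3 of Proposition \ref{propmin2}); log-Sobolev only shows that $F$ itself is bounded below, which is \fref{logsobo2}, a different and weaker fact. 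Your parenthetical eigenvalue route is the one that works and is essentially the paper's proof; note it delivers $-C(\Tr\varrho+\Tr\sqrt{H}\varrho\sqrt{H})^{1/2}-\Tr\varrho$ rather than the literal \fref{souslin}, the extra $\Tr\varrho$-terms being harmless wherever the inequality is used, since the trace is fixed by the density constraint there.

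Second, in (ii) you reduce the claim to convergence of the scalars $\Tr(\varrho_k\log\varrho_k)$, but the lemma asserts convergence of the operators $\varrho_k\log\varrho_k-\varrho_k$ in the $\calJ_1$ norm, and this stronger statement is genuinely used later (e.g.\ in \fref{weak1}, where $\|\varrho_\eps\log\varrho_\eps-\varrho\log\varrho\|_{\calJ_1}$ appears). Your uniform tail estimate does control $\|\varrho_k\log\varrho_k-h_\delta(\varrho_k)\|_{\calJ_1}$, since that operator is diagonal in the eigenbasis of $\varrho_k$; but for the truncated part, the eigenvalue stability $\sum_p|\lambda_p^k-\lambda_p|\le\|\varrho_k-\varrho\|_{\calJ_1}$ only gives convergence of traces. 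To get $\|h_\delta(\varrho_k)-h_\delta(\varrho)\|_{\calJ_1}\to0$ you also need convergence of the eigenprojections: Lipschitz functions are not operator-Lipschitz on $\calJ_1$. The paper supplies this by combining the convergence of eigenvalues and eigenvectors (Lemma \ref{convFP}) with the criterion of Theorem \ref{theoSimon} (weak convergence plus convergence of the $\calJ_1$ norms implies strong $\calJ_1$ convergence); you would need to add that ingredient.
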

\begin{proof} 
{\em Step 1: proof of the inequality \fref{souslin}.} We shall use the following inequality, deduced from Lemma \ref{lieb} which is proven in the Appendix: there exists $C>0$ such that, for all $\varrho\in \calE_+$,
\be
\label{ineq2}
\sum_{p\geq 1} p^2\lambda_p[\varrho]\leq C\Tr \sqrt{H}\varrho \sqrt{H},
\ee
where we have denoted by $(\lambda_p[\varrho])_{p\geq 1}$ the nonincreasing sequence of nonzero eigenvalues of $\varrho$ (this sequence is finite or infinite). The function $s\mapsto \beta(s)=s\log s-s$ is negative on $[0,e]$ and positive increasing on $[e,+\infty)$. Let
$$C_1=\sup_{s\in [0,e]}\frac{|s\log s-s|}{\sqrt{s}}<+\infty.$$
Let $\varrho\in \calE_+$ and denote by $(\lambda_p[\varrho])_{p> p_0}$ the eigenvalues of $\varrho$ that belong to the interval $(0,e]$. We have
\bee
-\Tr \beta(\varrho)\leq \sum_{p> p_0}\left|\beta(\lambda_p[\varrho])\right|&\leq& C_1\sum_{p> p_0}\sqrt{\lambda_p[\varrho]}\\
&\leq& C_1\left(\sum_{p> p_0}p^2\lambda_p[\varrho]\right)^{1/2}\left(\sum_{p> p_0}\frac{1}{p^2}\right)^{1/2}\\
&\leq &\frac{C_1}{\sqrt{p_0}}\left(\Tr \sqrt{H}\varrho \sqrt{H}\right)^{1/2},
\eee
which proves \fref{souslin}.

\bs
\ni
{\em Step 2: proof of (ii).} 
Consider a sequence $\varrho_k$ bounded in $\calE_+$, such that $\varrho_k\to \varrho$ in $\calJ_1$. Let $M=\sup_{k}\|\varrho_k\|_{\calL(L^2)}<+\infty$. There exists a constant $C_M>0$ such that
$$\forall s\in [0,M],\quad \left|s\log s-s\right|\leq C_Ms^{3/4}.$$
Thus, for all $\eps>0$, denoting again $\beta(s)=s\log s-s$, we get
\bee
\sum_{\lambda_p[\varrho_k]\leq \eps}\left|\beta(\lambda_p[\varrho_k])\right|&\leq& C_M\sum_{\lambda_p[\varrho_k]\leq \eps}(\lambda_p[\varrho_k])^{3/4}\leq C_M\eps^{1/4}\sum_{\lambda_p[\varrho_k]\leq \eps}(\lambda_p[\varrho_k])^{1/2}\\
&\leq& C_M\eps^{1/4}\left(\sum_{p\geq 1}p^2\lambda_p[\varrho_k]\right)^{1/2}\left(\sum_{p\geq 1}\frac{1}{p^2}\right)^{1/2}\\
&\leq &C\eps^{1/4}\left(\Tr \sqrt{H}\varrho_k \sqrt{H}\right)^{1/2}\leq C\eps^{1/4},
\eee
where $C$ is independent of $k$ and where we used \fref{ineq2}. The same inequality holds for the limit $\varrho$. Let $\eps>0$ and let us decompose
$$\beta(s)=\beta_1(s)+\beta_2(s)=(\beta\un_{s\leq \eps})(s)+(\beta\un_{s> \eps})(s).$$
For all $\eps$, one has
\bee
\Tr|\beta(\varrho_k)-\beta(\varrho)|
&\leq &\Tr|\beta_1(\varrho_k)|+\Tr|\beta_1(\varrho)|+\Tr|\beta_2(\varrho_k)-\beta_2(\varrho)|\\
&\leq&C\eps^{1/4}+\Tr|\beta_2(\varrho_k)-\beta_2(\varrho)|,
\eee
so that the result is proved if we show that $\beta_2(\varrho_k)$ converges to $\beta_2(\varrho)$ strongly in $\calJ_1$. According to Theorem \ref{theoSimon}, it is enough to prove that $\beta_2(\varrho_k)$ converges weakly to $\beta_2(\varrho)$ in $\calJ_1$ and that $\|\beta_2(\varrho_k)\|_{\calJ_1} \to \|\beta_2(\varrho)\|_{\calJ_1}$ to obtain the strong convergence in $\calJ_1$. We prove first the weak convergence. To this aim, we choose $\eps$ such that $\lambda_p[\varrho]\neq \eps$ for all $p\in \NN^*$ and denote
$$N=\max \left\{p:\,\lambda_p[\varrho] > \eps\right\}.$$
According to Lemma \ref{convFP}, we have
\be \label{cvvalprop}
\lambda_p[\varrho_k] \to \lambda_p[\varrho], \quad \forall p \geq 1,
\ee 
and we can choose $k$ large enough so that we have 
$$\lambda_p[\varrho_k]>\eps\mbox{ for all } p\leq N\mbox{ and }\lambda_p[\varrho_k]<\eps \mbox{ for all } p> N.$$
Besides, following again Lemma \ref{convFP}, we choose some eigenbasis $(\phi_p^k)_{p\in\NN^*}$ and $(\phi_p)_{p\in\NN^*}$ of $\varrho_k$ and $\varrho$, respectively, such that
\be
\label{cvvectprop}
\forall p\in \NN^*,\quad \lim_{k\to \infty}\|\phi_p^k-\phi_p\|_{L^2}=0.
\ee
Then, the actions of $\beta_2(\varrho_k)$ and $\beta_2(\varrho)$ on any $\varphi \in L^2(0,1)$ read
$$
\beta_2(\varrho_k) \varphi=\sum_{p=1}^N \beta(\lambda_p[\varrho_k]) (\phi_p^k, \varphi) \phi_p^k \qquad ; \qquad \beta_2(\varrho) \varphi=\sum_{p=1}^N \beta(\lambda_p[\varrho]) (\phi_p, \varphi) \phi_p,
$$
where $(\cdot,\cdot)$ denotes the $L^2(0,1)$ scalar product (taken linear with respect to its second variable and anti-linear with respect to its first variable). Therefore, for any bounded operator $B$,
$$
\Tr \left(\beta_2(\varrho_k) B\right)= \sum_{p=1}^N \beta(\lambda_p[\varrho_k]) (\phi_p^k, B \phi_p^k) \to  \sum_{p=1}^N \beta(\lambda_p[\varrho]) (\phi_p, B \phi_p)=\Tr \left(\beta_2(\varrho) B\right),
$$
thanks to \fref{cvvalprop}, \fref{cvvectprop} and the continuity of the function $\beta$. This proves the weak convergence of $\beta_2(\varrho_k)$ in $\calJ_1$. Regarding the convergence of the norm, we have directly
$$
\|\beta_2(\varrho_k)\|_{\calJ_1}= \sum_{p=1}^N |\beta(\lambda_p[\varrho_k])| \to \sum_{p=1}^N |\beta(\lambda_p[\varrho])|=\|\beta_2(\varrho)\|_{\calJ_1},
$$
and item (ii) is proved.

\bs
\ni
{\em Step 3: proof of the strict convexity (iii).}  We recall first the Peierls inequality \cite{Simon-trace}: let $(u_i)_{i \geq 1}$ be an orthonormal basis of $L^2(0,1)$, whose scalar product is denoted by $(\cdot,\cdot)$; then, setting $\beta(s)=s\log s-s$, we have
\be
\label{peierls}
\sum_{i \geq 1} \beta ((u_i,\varrho \,u_i)) \leq \Tr \left(\beta(\varrho) \right).
\ee
Indeed, denoting by $(\lambda_i, \phi_i)_{i \geq 1}$ the spectral elements of $\varrho \in \calE_+$, we have
$$
(u_i,\beta(\varrho)u_i)=\sum_{j \geq 1} \beta(\lambda_j) |(\phi_j,u_i)|^2.
$$
Since $\sum_{j \geq 1} |(\phi_j,u_i)|^2=1$, it follows from the Jensen inequality that
$$
\beta\left((u_i,\varrho \, u_i) \right) = \beta\left( \sum_{j \geq 1}\lambda_j |(\phi_j,u_i)|^2 \right) \leq \sum_{j \geq 1} \beta(\lambda_j) |(\phi_j,u_i)|^2.
$$
Summing up the latter relation with respect to $i$ and using the relation  $\sum_{i \geq 1} |(\phi_j,u_i)|^2=1$, the Peierls inequality \fref{peierls} follows. 

Consider now $\varrho_1$, $\varrho_2$ in $\calE_+$ such that  $\varrho_1 \neq \varrho_2$.  Let $t\in (0,1)$ and denote by  $(\mu_i, \psi_i)_{i \in \NN^*}$ the spectral elements of the operator $t \varrho_1 +(1-t) \varrho_2$. Then
$$
\Tr \left(\beta(t \varrho_1 +(1-t) \varrho_2 ) \right)=\sum_{i \geq 1} \beta(\mu_i)=\sum_{i \geq 1} \beta((\psi_i,(t \varrho_1 +(1-t) \varrho_2)\psi_i)).
$$
There exists at least one index $i_0$ such that $(\psi_{i_0}, \varrho_1 \psi_{i_0})\neq (\psi_{i_0},\varrho_2 \psi_{i_0})$. Indeed, if not, we would have $\varrho_1=\varrho_2$ since $(\psi_i)_{i \in \NN^*}$ is an orthonormal basis of $L^2(0,1)$.
Since $\beta$ is strictly convex, it thus comes,
$$
\sum_{i \geq 1} \beta(t (\psi_i,\varrho_1\,\psi_i)+(1-t)(\psi_i,\varrho_2\,\psi_i)) < \sum_{i \geq 1} \left[t \beta((\psi_i,\varrho_1\,\psi_i))
+(1-t)\beta((\psi_i,\varrho_2\,\psi_i))\right].
$$
Using the Peierls inequality \fref{peierls} to control the right hand side, it comes finally
$$\Tr \left(\beta(t \varrho_1 +(1-t) \varrho_2 ) \right)< t \Tr \left(\beta(\varrho_1) \right)+(1-t)\Tr \left(\beta(\varrho_2 )\right),$$
which yields the strict convexity of the functional.
\end{proof}

\section{Existence and uniqueness of the minimizer}
\label{sect4}
In this section, we prove the first part of our main Theorem \ref{theo1}. More precisely, we prove the following proposition.
\begin{proposition}
\label{propmin2}
Consider a density $n(x)$ such that $n>0$ on $[0,1]$ and $n\in \Hunper$. Then the minimization problem with constraint
\be
\label{min2}
\min F(\varrho)\mbox{ for $\varrho\in \calE_+$ such that }n[\varrho]=n,
\ee
where $F$ is defined by \fref{F}, is attained for a unique density operator $\varrho[n]$.
\end{proposition}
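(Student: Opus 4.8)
The plan is to run the direct method of the calculus of variations, leaning almost entirely on Lemmas \ref{strongconv} and \ref{propentropie} of Section \ref{sect3}.

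First I would check that the problem is non-degenerate. The admissible set is nonempty: since $n\in\Hunper$ and $\min_{[0,1]}n>0$, the square root $\sqrt n$ again belongs to $\Hunper$ (composition with the $C^1$ map $t\mapsto\sqrt t$ on $[\min n,\max n]$, together with $\sqrt{n(0)}=\sqrt{n(1)}$), so the rank-one operator $\varrho_0=(\sqrt n,\cdot)_{L^2}\sqrt n$ lies in $\calE_+$, has $n[\varrho_0]=n$ by \fref{srt2}, and satisfies $F(\varrho_0)=\beta\big(\int_0^1 n\big)+\big\|\frac{d}{dx}\sqrt n\big\|_{L^2}^2<\infty$, where $\beta(s)=s\log s-s$. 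For the lower bound I would invoke the logarithmic Sobolev inequality \fref{logsobo}: since the constraint forces $\Tr\varrho=\int_0^1 n$ and $n\in\Hunper\subset L^\infty$, every admissible $\varrho$ obeys $F(\varrho)\geq\int_0^1 n\log n\,dx+\big(\tfrac{\log(4\pi)}{2}-1\big)\int_0^1 n\,dx$, so the infimum in \fref{min2} is a finite real number.

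Next I would take a minimizing sequence $(\varrho_k)$ and establish compactness. As $\Tr\varrho_k$ is fixed, only $t_k:=\Tr(\sqrt H\varrho_k\sqrt H)$ needs to be controlled; inequality \fref{souslin} gives $F(\varrho_k)\geq t_k-Ct_k^{1/2}$, and the boundedness of $F(\varrho_k)$ then bounds $t_k$, hence $(\varrho_k)$ in $\calE_+$. Lemma \ref{strongconv} now supplies a subsequence and a limit $\varrho\in\calE_+$ with $\varrho_k\to\varrho$ in $\calJ_1$ and $\Tr(\sqrt H\varrho\sqrt H)\leq\liminf_k\Tr(\sqrt H\varrho_k\sqrt H)$. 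To pass the constraint to the limit I would use \fref{weak}: for any $\Phi\in L^\infty(0,1)$ the multiplication operator $\Phi$ is bounded, so $\int_0^1\Phi n\,dx=\Tr(\Phi\varrho_k)\to\Tr(\Phi\varrho)=\int_0^1\Phi\,n[\varrho]\,dx$, whence $n[\varrho]=n$. Lower semicontinuity of $F$ then follows from \fref{second} for the kinetic part and from Lemma \ref{propentropie}(ii) for the entropy part, which is in fact $\calJ_1$-continuous along a bounded sequence of $\calE_+$; therefore $F(\varrho)\leq\liminf_k F(\varrho_k)$, and $\varrho=:\varrho[n]$ is a minimizer.

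Finally, uniqueness follows from convexity: the admissible set is convex (the map $\varrho\mapsto n[\varrho]$ is linear), and $F$ is strictly convex on it, being the sum of the affine functional $\varrho\mapsto\Tr(\sqrt H\varrho\sqrt H)$ and the strictly convex functional of Lemma \ref{propentropie}(iii); if two distinct minimizers existed, their midpoint would be admissible with strictly smaller energy, a contradiction. The only genuinely delicate point in the whole argument is the stability of the pointwise density constraint under the limit, which is exactly why the strong trace-norm compactness of Lemma \ref{strongconv} is needed — weak-$*$ $\calJ_1$ convergence would not suffice, since multiplication operators by $L^\infty$ functions are not compact; everything else is bookkeeping on top of the two lemmas of Section \ref{sect3}.
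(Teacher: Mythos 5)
Your proof is correct and follows essentially the same route as the paper: the same rank-one operator $\sqrt n\,(\sqrt n,\cdot)$ to show the admissible set is nonempty, the logarithmic Sobolev inequality \fref{logsobo} for the lower bound, the estimate \fref{souslin} to bound $\Tr(\sqrt H\varrho_k\sqrt H)$ along a minimizing sequence, Lemma \ref{strongconv} for compactness and lower semicontinuity of the kinetic term, Lemma \ref{propentropie}(ii) for continuity of the entropy, the weak formulation \fref{weak} to pass the constraint to the limit, and strict convexity from Lemma \ref{propentropie}(iii) for uniqueness. Nothing is missing.
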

\begin{proof}
We denote
$$\calA=\left\{\varrho\in \calE_+ \mbox{ such that }n[\varrho]=n\right\}.$$

\bs
\ni
{\em Step 1: $\calA$ is not empty.} We start with a simple, but fundamental remark: thanks to our assumption on the density $n(x)$, the set $\calA$ is not empty. Indeed, let $\phi_1:=\|n\|^{-1/2}_{L^1} \sqrt{n}$ and complete $\phi_1$ to an orthonormal basis  $(\phi_i)_{i \geq 1}$ of $L^2(0,1)$. The function $n$ belongs to $\Hunper$. Hence, by Sobolev embedding in dimension one, $n$ is continuous and, from $n>0$, we deduce that 
$$n(x)\geq \min_{[0,1]}n>0$$
and then $\sqrt{n}\in \Hunper$. For all $\psi \in L^2(0,1)$, consider the density operator $\nu$ defined by
\be
\label{nu}
\nu \psi:=\sqrt{n}\, (\sqrt{n},\psi),
\ee
we find
\bee
\Tr (\sqrt{H} \nu \sqrt{H})&=& \|\sqrt{H}\sqrt{\nu}\|^2_{\calJ_2}= \sum_{i \geq 1}(\sqrt{H}\sqrt{\nu} \phi_i,\sqrt{H}\sqrt{\nu}\phi_i)\\
&=& (\sqrt{H}\sqrt{n},\sqrt{H}\sqrt{n})=\int_0^1 \left| \frac{d}{dx}\sqrt{n}\right|^2 dx < \infty,\\
\Tr (\Phi \nu) &=&\int_0^1 n(x)\Phi(x)dx\qquad \forall \Phi\in L^\infty(0,1),
\eee
so, by the characterization \fref{weak}, $\nu$ belongs to $\calA$.

\bs
\ni
{\em Step 2: $F$ is bounded from below on $\calA$}. From \fref{logsobo}, we deduce that, for all $\varrho\in \calA$,
\be
\label{logsobo2}
F(\varrho)\geq \int_0^1n(x)\log n(x)dx+\left(\frac{\log(4\pi)}{2}-1\right)\int_0^1n(x)dx>-\infty,
\ee
since by Sobolev embedding $n$ is bounded. Therefore, one can consider a minimizing sequence $(\varrho_k)_{k\in \NN}$ for \fref{min2}, i.e. a sequence $\varrho_k\in \calA$ such that
$$\lim_{k\to +\infty}F(\varrho_k)=\inf_{\sigma\in\calA}F(\sigma)>-\infty.$$

\bs
\ni
{\em Step 3: uniform bound in $\calE$.} Let us prove that $(\varrho_k)_{k\in \NN}$ is a bounded sequence of $\calE_+$. 
Since $\varrho_k\in \calA$, we already have
$$\|\varrho_k\|_{\calJ_1}=\Tr \varrho_k=\int_0^1n(x)dx<+\infty.$$
Moreover, since the density operator $\nu$ defined by \fref{nu} belongs to $\calA$, we have, for $k$ large enough,
$$\Tr (\varrho_k\log \varrho_k-\varrho_k)+\Tr (\sqrt{H}\varrho_k\sqrt{H})=F(\varrho_k)\leq F(\nu)+1<+\infty.$$
Hence, using the inequality \fref{souslin}, we obtain
$$-C\left(\Tr \sqrt{H}\varrho_k\sqrt{H}\right)^{1/2}+\Tr (\sqrt{H}\varrho_k\sqrt{H})\leq F(\nu)+1<+\infty,$$
thus
$$\sup_{k\in \NN}\Tr (\sqrt{H}\varrho_k\sqrt{H})<+\infty.$$

\bs
\ni
{\em Step 4: convergence to the minimizer.} Since $(\varrho_k)_{k\in \NN}$ is a bounded sequence of $\calE_+$, one can apply Lemma \ref{strongconv} to deduce that, after extraction of a subsequence, we have
\be
\label{con1}
\varrho_k\to\varrho\mbox{ in }\calJ_1\mbox{ as }k\to +\infty
\ee
and
\be
\label{con2}
\Tr(\sqrt{H}\varrho\sqrt{H})\leq \liminf_{k\to +\infty} \Tr(\sqrt{H}\varrho_k\sqrt{H}).
\ee
Next, by \fref{con1} and Lemma \ref{propentropie} {\em (ii)}, we get
$$\Tr (\varrho_k\log \varrho_k-\varrho_k)\to \Tr (\varrho\log \varrho-\varrho)\mbox{ as }k\to +\infty,$$
which yields, with \fref{con2},
\be
\label{eqmin}
F(\varrho)\leq \liminf_{k\to+\infty} F(\varrho_k)=\inf_{\sigma\in\calA}F(\sigma).
\ee
Let $\Phi\in L^\infty(0,1)$ and denote also by $\Phi$ the bounded multiplication operator by $\Phi$. Since $\varrho_k$ converges to $\varrho$ in $\calJ_1$, we have
$$\int_0^1\Phi(x)n(x)dx=\Tr (\Phi \varrho_k)\to \Tr (\Phi \varrho) \mbox{ as }k\to +\infty,$$
thus, from the characterization \fref{weak}, we deduce that $n[\varrho]=n$, which means that $\varrho\in \calA$.
This enables finally to conclude from \fref{eqmin} that, in fact, we have the equality
$$F(\varrho)=\inf_{\sigma\in\calA}F(\sigma)=\min_{\sigma\in\calA}F(\sigma).$$
The uniqueness of the minimizer is a consequence of the strict convexity of $F$, see Item {\em (iii)} of Lemma \ref{propentropie}. 
\end{proof}

\section{Characterization of the minimizer}
\label{sect5}

This section is devoted to the second part of our main Theorem \ref{theo1}, the characterization of the minimizer. As we explained  at the end of Section \ref{sect2}, we need to define a penalized version of our minimization problem.

\subsection{A penalized minimization problem}

Consider a density $n(x)$ such that $n>0$ on $[0,1]$ and $n\in \Hunper$. For all $\eps\in (0,1]$ we define the penalized free energy functional, for all $\varrho\in \calE_+$:
$$
F_\eps(\varrho)= \Tr (\varrho \log \varrho - \varrho)+\Tr (\sqrt{H}\varrho\sqrt{H})+\frac{1}{2 \eps} \| n[\varrho]-n\|^2_{L^2}. 
$$
\begin{proposition}
\label{lpropminreg}
Let $\eps\in (0,1)$ and let $n\in \Hunper$ such that $n>0$ on $[0,1]$. The minimization problem without constraint
\be
\label{minreg}
\min_{\varrho\in \calE_+}F_\eps(\varrho)
\ee
where $F_\eps$ is defined above, is attained for a unique density operator $\varrho_\eps[n]$, which has the following characterization: we have
\be
\label{charreg}
\varrho_\eps[n]=\exp \left(-(H+A_\eps)\right).
\ee
where $A_\eps\in \Hunper$.
\end{proposition}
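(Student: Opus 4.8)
I would split the proof into (a) existence and uniqueness of the minimizer, by the direct method of the calculus of variations, and (b) the exponential form \fref{charreg}, by writing and exploiting the Euler--Lagrange equation. The key structural point is that the penalized problem \fref{minreg} is genuinely \emph{unconstrained} on $\calE_+$, so the admissible variations are two-sided and the Euler--Lagrange relation is an equality, not just an inequality.

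\textbf{Existence and uniqueness.} First I would check that $F_\eps$ is bounded below on $\calE_+$ and that minimizing sequences are bounded in $\calE_+$. Testing against the rank-one operator $\nu$ of \fref{nu} (finite energy, zero penalization) one gets a minimizing sequence with $F_\eps(\varrho_k)\le F(\nu)+1$; since the penalization is nonnegative, the logarithmic Sobolev inequality \fref{logsobo} together with $s\log s\ge -e^{-1}$ and the quadratic penalization term control $\Tr\varrho_k$, and then \fref{souslin} controls $\Tr(\sqrt H\varrho_k\sqrt H)$, exactly as in the proof of Proposition~\ref{propmin2}. So $(\varrho_k)$ is bounded in $\calE_+$ and Lemma~\ref{strongconv} gives (up to extraction) $\varrho_k\to\varrho$ in $\calJ_1$ with $\Tr(\sqrt H\varrho\sqrt H)\le\liminf\Tr(\sqrt H\varrho_k\sqrt H)$; Lemma~\ref{propentropie}(ii) passes the entropy to the limit, and the penalization term is in fact continuous, because $\varrho_k\to\varrho$ in $\calJ_1$ forces $n[\varrho_k]\to n[\varrho]$ in $L^1$ by \fref{srt2}, and the uniform bound of $n[\varrho_k]$ in $W^{1,1}_{per}\hookrightarrow L^\infty$ upgrades this to convergence in $L^2$. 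Hence $F_\eps(\varrho)\le\liminf F_\eps(\varrho_k)$, so $\varrho=:\varrho_\eps[n]$ is a minimizer. Uniqueness is immediate from strict convexity of $F_\eps$: the entropy is strictly convex by Lemma~\ref{propentropie}(iii), $\varrho\mapsto\Tr(\sqrt H\varrho\sqrt H)$ is linear, and $\varrho\mapsto\tfrac1{2\eps}\|n[\varrho]-n\|_{L^2}^2$ is convex since $\varrho\mapsto n[\varrho]$ is linear.

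\textbf{The Euler--Lagrange equation.} Since $\varrho_\eps$ minimizes $F_\eps$ over all of $\calE_+$, and $\calE_+$ is stable under $\varrho\mapsto\varrho_t:=\sqrt{\varrho_\eps}(I+tB)\sqrt{\varrho_\eps}$ for $B$ bounded self-adjoint and $|t|$ small (note $\sqrt H\varrho_t\sqrt H=(\sqrt H\sqrt{\varrho_\eps})(I+tB)(\sqrt{\varrho_\eps}\sqrt H)\in\calJ_2\cdot\calL\cdot\calJ_2\subset\calJ_1$), I would differentiate $t\mapsto F_\eps(\varrho_t)$ at $0$. This curve leaves $\ker\varrho_\eps$ untouched, so the singular slope $\beta'(s)=\log s$ at $s=0$ is never hit, and — $\Tr(\varrho_\eps\log\varrho_\eps)$ being finite for $\varrho_\eps\in\calE_+$ — the derivative exists; using cyclicity of the trace and $\sqrt{\varrho_\eps}\log\varrho_\eps\sqrt{\varrho_\eps}=\varrho_\eps\log\varrho_\eps\in\calJ_1$, it equals $\Tr\bigl(B\,\sqrt{\varrho_\eps}(\log\varrho_\eps+H+A_\eps)\sqrt{\varrho_\eps}\bigr)$, where $A_\eps:=\tfrac1\eps(n[\varrho_\eps]-n)$. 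Here $A_\eps\in L^\infty(0,1)$, because $n[\varrho_\eps]\in W^{1,1}_{per}\hookrightarrow L^\infty$ and $n\in\Hunper\hookrightarrow L^\infty$, so multiplication by $A_\eps$ is bounded. As the variation is two-sided, this derivative vanishes for every such $B$, whence
\be
\label{ELpen}
\sqrt{\varrho_\eps}\,\bigl(\log\varrho_\eps+H+A_\eps\bigr)\,\sqrt{\varrho_\eps}=0\qquad\text{in }\calJ_1 .
\ee

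\textbf{From \fref{ELpen} to \fref{charreg}, and the main obstacle.} Two facts remain, both to be established by contradiction from the minimality of $\varrho_\eps$, exploiting that $\beta(s)=s\log s-s<0$ on $(0,1]$ with $\beta(s)/s\to-\infty$ as $s\to0^+$: first, $\ker\varrho_\eps=\{0\}$ — otherwise one adds to $\varrho_\eps$ a small term $t\,(\chi,\cdot)\chi$ with $\chi\in\Hunper$ chosen to overlap the kernel, producing an entropy drop of order $t\log t$ that beats the $O(t)$ growth of the kinetic and penalization terms; second, the $L^2$-orthonormal eigenbasis $(\phi_p^\eps)$ of $\varrho_\eps$ is total in $\Hunper$ — otherwise one could again lower $F_\eps$ by a variation feeling a direction of $\Hunper$ omitted by the $\phi_p^\eps$. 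Granting these, sandwiching \fref{ELpen} between $\phi_p^\eps$ and $\phi_q^\eps$ and using $\rho_p^\eps>0$ gives $Q_{A_\eps}(\phi_p^\eps,\phi_q^\eps)=-\log(\rho_p^\eps)\,\delta_{pq}$; since $A_\eps\in L^\infty$, the operator $H+A_\eps$ is self-adjoint with compact resolvent and form domain $\Hunper$, and totality of $(\phi_p^\eps)$ in $\Hunper$ turns the diagonality of $Q_{A_\eps}$ into the statement that each $\phi_p^\eps$ is an eigenfunction of $H+A_\eps$ with eigenvalue $-\log\rho_p^\eps$. Hence $H+A_\eps=\sum_p(-\log\rho_p^\eps)(\phi_p^\eps,\cdot)\phi_p^\eps$ and therefore $\varrho_\eps=\exp(-(H+A_\eps))$, which is \fref{charreg}. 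Finally, to see $A_\eps\in\Hunper$: with $A_\eps\in L^\infty$, elliptic regularity gives $\phi_p^\eps\in\H^2_{per}$ with $\|\phi_p^\eps\|_{\Hunper}\lesssim\sqrt{\mu_p}$ where $\mu_p=-\log\rho_p^\eps\sim Cp^2$, so by the Banach-algebra property of $\Hunper$ in dimension one the series $n[\varrho_\eps]=\sum_p e^{-\mu_p}|\phi_p^\eps|^2$ converges in $\Hunper$; since $n\in\Hunper$ as well, $A_\eps=\tfrac1\eps(n[\varrho_\eps]-n)\in\Hunper$. The hard part will be the two intermediate facts, and especially the totality of $(\phi_p^\eps)$ in $\Hunper$: it is precisely what identifies $(\phi_p^\eps)$ with the eigenbasis of $H+A_\eps$, it is not a formal consequence of $\varrho_\eps\in\calE_+$, and making the underlying entropy-variation arguments rigorous — differentiating traces of $\beta$ along the relevant curves and tracking the eigenvalue that detaches from $0$ — is where the real work lies.
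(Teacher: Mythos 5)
Your overall architecture (direct method for existence/uniqueness, two\-/sided variations $\varrho_\eps+t\sqrt{\varrho_\eps}B\sqrt{\varrho_\eps}$ for the Euler--Lagrange equation, then the two key facts $\ker\varrho_\eps=\{0\}$ and totality of $(\phi_p^\eps)$ in $\Hunper$) matches the paper's, and your existence/uniqueness part and the final identification via $Q_{A_\eps}(\phi_p^\eps,\phi_q^\eps)=-\log(\rho_p^\eps)\delta_{pq}$ are sound. But there is a genuine gap at the step you dispose of in one line: the differentiability of $\varrho\mapsto\Tr(\varrho\log\varrho-\varrho)$ along the curve $t\mapsto\sqrt{\varrho_\eps}(I+tB)\sqrt{\varrho_\eps}$. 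Your justification --- ``the curve leaves $\ker\varrho_\eps$ untouched, so the singular slope $\beta'(s)=\log s$ at $s=0$ is never hit, and $\Tr(\varrho_\eps\log\varrho_\eps)$ is finite, so the derivative exists'' --- only shows that the \emph{candidate} first-order term $\Tr(B\,\varrho_\eps\log\varrho_\eps)$ is a well-defined trace. It does not show that the difference quotient converges to it. The eigenvalues of $\varrho_\eps$ accumulate at $0$, so $\beta'(\lambda_p)=\log\lambda_p$ is unbounded along the spectrum and $\beta''(s)=1/s$ blows up there; the perturbation mixes eigenvectors, and the second-order remainder is not controlled by any obvious bound. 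This is exactly why the paper does \emph{not} differentiate $F_\eps$ directly: it introduces the regularized entropy $\beta_\eta(s)=(s+\eta)\log(s+\eta)-s-\eta\log\eta$, proves differentiability of $\Tr\beta_\eta(\varrho)$ for $\eta>0$ by holomorphic functional calculus (Lemma \ref{lemdiff}), obtains the Euler--Lagrange identity for the minimizer $\varrho_{\eps,\eta}$ of $F_{\eps,\eta}$, and only then passes to the limit $\eta\to 0$ --- which itself requires showing $\varrho_{\eps,\eta}\to\varrho_{\eps,0}$ in $\calJ_1$ with $\sqrt{H}\sqrt{\varrho_{\eps,\eta}}\to\sqrt{H}\sqrt{\varrho_{\eps,0}}$ in $\calJ_2$, and a delicate $\kappa$-splitting of $\Tr(\beta'_\eta(\varrho_{\eps,\eta})\varrho_{\eps,\eta}h)$ into large and small eigenvalues to recover $\Tr(\log(\varrho_\eps)\varrho_\eps h)$ in the limit. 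Without this (or an equally careful substitute), equation \fref{ELpen} is not established.

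The same omission propagates into the two ``hard parts'' you defer. In the paper, both the proof that $\ker\varrho_\eps=\{0\}$ and the proof that $(\phi_p^\eps)$ is total in $\Hunper$ lean on the regularization: one compares increments via $F_\eps(\varrho(t))-F_\eps(\varrho_\eps)\le F_{\eps,\eta}(\varrho(t))-F_{\eps,\eta}(\varrho_\eps)$ (valid because $\beta'\le\beta'_\eta$ on increasing eigenvalues), differentiates the \emph{regularized} functional along the rank-one perturbation, and then tunes $\eta$ (for the kernel step, the $\tfrac14\log\eta$ term supplies the contradiction; for totality, letting $\eta\to 0$ in $-\sum_{p\le N}|(\phi,\phi_p^\eps)|^2\log(\lambda_p^\eps+\eta)\le Q_{A_\eps}(\phi,\phi)$ gives $Q_{A_\eps}(\phi_N,\phi_N)\le Q_{A_\eps}(\phi,\phi)$, hence weak and then strong $\H^1$ convergence of the partial sums). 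Your heuristic ``$t\log t$ beats $O(t)$'' is the right intuition for the kernel step, but making it rigorous without a differentiable surrogate for the entropy is precisely the missing work; and for totality your sketch (``a variation feeling an omitted direction would lower $F_\eps$'') does not yet contain the actual mechanism. A smaller point: also note that a further limit is still needed after this proposition --- $A_\eps$ only converges in $\H^{-1}_{per}$, not in $\Hunper$, as $\eps\to 0$ --- but that belongs to the next subsection, not to this statement.
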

\begin{proof} Since the entropy functional $\Tr (\varrho \log \varrho - \varrho)$ is not differentiable on $\calE_+$, we regularize it. For all $\eta \in [0,1]$ and $s\in \RR_+$, we define the regularized entropy
$$\beta_\eta(s)=(s+\eta) \log (s+\eta)-s-\eta\log \eta,$$
and the associated free energy functional, for all $\varrho\in \calE_+$:
$$
F_{\eps,\eta}(\varrho)= \Tr \left(\beta_\eta(\varrho)\right)+\Tr (\sqrt{H}\varrho\sqrt{H})+\frac{1}{2 \eps} \| n[\varrho]-n\|^2_{L^2}. 
$$
Notice that $\beta'_\eta(s)=\log(s+\eta)$, $\beta_\eta(0)=0$, and that $\beta_\eta$ is strictly convex on $\RR_+$ and holomorphic on $(-\eta, \infty) \times \RR$ for the convenient branch.

\bs
\ni
{\em Step 1: minimization of $F_{\eps,\eta}$.}  In this step, we prove that for all $\eta\in [0,1]$, the problem
\be
\label{minregeta}
\min_{\varrho\in \calE_+}F_{\eps,\eta}(\varrho)
\ee
admits a unique minimizer $\varrho_{\eps,\eta}$. Notice that for $\eta=0$, this problem is nothing but \fref{minreg}: in the statement of the Proposition, we have denoted shortly $\varrho_\eps=\varrho_{\eps,0}$.

By \fref{sqrt} and a Sobolev embedding in dimension one, we have
$$\|n[\varrho]\|_{L^\infty}\leq C\Tr \varrho+C\Tr (\sqrt{H}\varrho\sqrt{H}),$$
so the functional $F_\eps^\eta$ is well-defined on $\calE_+$ for all $\eta\in [0,1]$ and $\eps\in (0,1]$.
We will need the following technical lemma on the function $\beta_\eta$.
\begin{lemma}
\label{propentropieeps}
The application $\varrho\mapsto \beta_\eta (\varrho)$ possesses the following properties.\\
(i) There exists a constant $C>0$ such that, for all $\varrho\in \calE_+$ and for all $\eta\in [0,1]$, we have
\be
\label{souslineps}
\Tr \beta_\eta (\varrho)\geq -C\left(\Tr \sqrt{H}\varrho\sqrt{H}\right)^{1/2}.
\ee
(ii) Let $\varrho_k$ be a bounded sequence of $\calE_+$ such that $\varrho_k$ converges to $\varrho$ in $\calJ_1$, then for all $\eta\in [0,1]$, $\beta_\eta(\varrho_k)$ converges to $\beta_\eta (\varrho)$ in $\calJ_1$.\\
(iii) For all $\eta\in [0,1]$, the application $\varrho\mapsto \Tr \beta_\eta (\varrho)$ is strictly convex on $\calE_+$.\\
(iv) Consider a sequence $\varrho_\eta$ bounded in $\calE_+$ such that $\varrho_\eta\to \varrho$ in $\calJ_1$ as $\eta\to 0$. Then $\Tr \beta_\eta(\varrho_\eta)$ converges to $\Tr \beta_0 (\varrho)$ as $\eta\to 0$.
\end{lemma}
\ni
{\em Proof of the lemma.}
It is not difficult to adapt the proof of Lemma \ref{propentropie} in order to show Items {\em (i), (ii), (iii)}. We shall only prove Item {\em (iv)}, proceeding similarly to Step 2 of Lemma \ref{propentropie}. We first notice that the function $\beta_\eta$ converges to $\beta_0$ uniformly on all $[0,M]$, $M>0$, and that one has
$$\forall s\in [0,M],\quad \left|\beta_\eta(s)\right|\leq C_M\sqrt{s},$$
with $C_M$ independent of $\eta$. Let $M=\sup_\eta \|\varrho_\eta\|_{\calL(L^2)}<+\infty$. For all $N\in \NN^*$, by using the inequality \fref{ineq2}, we get
$$
\sum_{p\geq N}\left|\beta_\eta(\lambda_p[\varrho_\eta])\right|\leq C_M\sum_{p\geq N}\sqrt{\lambda_p[\varrho_\eta]}\leq \frac{C_M}{\sqrt{N}}\left(\Tr \sqrt{H}\varrho_\eta \sqrt{H}\right)^{1/2}\leq \frac{C_M}{\sqrt{N}},
$$
where we used the fact that $(\varrho_k)$ is a bounded sequence of $\calE_+$ and where $\lambda_p[\varrho_\eta]$ denotes the $p$-th nonzero eigenvalue of $\varrho$.
Hence, decomposing
\bee
\left|\Tr \beta_\eta(\varrho_\eta) -\Tr \beta_0(\varrho)\right|&\leq&
\sum_{p< N}\left|\beta_\eta(\lambda_p[\varrho_\eta])-\beta_0(\lambda_p[\varrho])\right|\\
&&+\sum_{p\geq N}\left|\beta_\eta(\lambda_p[\varrho_\eta])\right|+\sum_{p\geq N}\left|\beta_0(\lambda_p[\varrho])\right|,
\eee
one deduces from the uniform convergence of $\beta_\eta$ to $\beta_0$ and from
$$
|\lambda_p[\varrho_\eta]-\lambda_p[\varrho]| \leq \|\varrho_\eta-\varrho \|_{\calL(L^2)}\leq \|\varrho_\eta-\varrho\|_{\calJ_1}, \qquad \forall p \geq 1
$$
(see the proof of Lemma \ref{convFP} in the Appendix) that
$$\left|\Tr \beta_\eta(\varrho_\eta) -\Tr \beta_0(\varrho)\right|\to 0 \mbox{ as }\eta\to 0.$$
The proof of Lemma \ref{propentropieeps} is complete.
\qed

Let us now study the minimization problem \fref{minregeta}, for fixed $\eta\in [0,1]$. For all $\varrho\in \calE_+$, we deduce from \fref{souslineps} that
$$F_{\eps,\eta}(\varrho)\geq -C\left(\Tr \sqrt{H}\varrho\sqrt{H}\right)^{1/2}+\Tr (\sqrt{H}\varrho\sqrt{H})+\frac{1}{4 \eps} \| n[\varrho]\|^2_{L^2}-\frac{1}{2 \eps} \|n\|^2_{L^2},$$
where we used $(a-b)^2\geq \frac{1}{2}a^2-b^2$. From this inequality, we deduce two facts. First, that $\inf_{\varrho\in \calE_+}F_{\eps,\eta}(\varrho)>-\infty$. Second, that any minimizing sequence $\varrho_k$ is bounded in $\calE_+$. Indeed, we have
$$F_{\eps,\eta}(\varrho_k)\leq F_{\eps,\eta}(0)= \frac{1}{2 \eps} \|n\|^2_{L^2},$$
thus
\bee
&&-C\left(\Tr \sqrt{H}\varrho_k\sqrt{H}\right)^{1/2}+\Tr( \sqrt{H}\varrho_k\sqrt{H})+\frac{1}{4 \eps} \sup_{k}\| n[\varrho_k]\|_{L^1}^2\\
&&\hspace{4cm}\leq \sup_{k}F_{\eps,\eta}(\varrho_k)+\frac{1}{2 \eps} \|n\|^2_{L^2}\leq \frac{1}{\eps} \|n\|^2_{L^2}.
\eee
This implies that, for all $k$,
\be
\label{hammamet}
\Tr \varrho_k+\Tr \sqrt{H}\varrho_k\sqrt{H}\leq C_\eps
\ee
where $C_\eps$ is a positive constant independent of $k$ and $\eta$.
 Hence, according to Lemma \ref{strongconv}, one can extract a subsequence still denoted $\varrho_k$ such that the convergences \be
\label{con2bis}
\varrho_k\to\varrho\mbox{ in }\calJ_1, \qquad \Tr(\sqrt{H}\varrho\sqrt{H})\leq \liminf_{k\to +\infty} \Tr(\sqrt{H}\varrho_k\sqrt{H})
\ee
 hold true as $k\to +\infty$. By Lemma \ref{propentropieeps} {\em (ii)}, we have
\be
\label{con3}
\Tr \beta_\eta(\varrho_k)\to \Tr\beta_\eta(\varrho).
\ee 

Let us now prove that $n[\varrho_k]$ converges to $n[\varrho]$ in $L^\infty(0,1)$, which implies in particular that
\be
\label{con4}
 \| n[\varrho_k]-n\|^2_{L^2}\to  \| n[\varrho]-n\|^2_{L^2}.
 \ee
We have
$$\|\sqrt{n[\varrho_k]}\|_{\H^1}^2=\int_0^1n[\varrho_k](x)dx+\left\|\frac{d}{dx}\sqrt{n[\varrho_k]}\right\|_{L^2}^2\leq \Tr\varrho_k+\Tr \sqrt{H}\varrho_k\sqrt{H}<+\infty,$$
where we used \fref{sqrt}. Therefore, the sequence $(\sqrt{n[\varrho_k]})_{k\in \NN}$ is bounded in $\H^1(0,1)$, and by Sobolev embedding one can extract a subsequence such that $\sqrt{n[\varrho_k]}$ converges to a function $f\in \calC^0([0,1])$ in the  $L^\infty(0,1)$ topology. This implies that $n[\varrho_k]$ converges to $f^2$ in $L^\infty$.

Moreover, for all $\Phi\in L^\infty(0,1)$, we know from \fref{con2bis} that
$$\int_0^1n[\varrho_k]\Phi dx=\Tr \varrho_k \Phi\to \Tr \varrho \Phi=\int_0^1n[\varrho]\Phi dx,$$
which means that $n[\varrho_k]$ converges weakly to $n[\varrho]$ in $L^1(0,1)$. This enables to identify the limit: we have in fact $f^2=n[\varrho]$.

Finally, \fref{con2bis}, \fref{con3} and \fref{con4} yield
$$F_{\eps,\eta}(\varrho)\leq \liminf F_{\eps,\eta} (\varrho_k)=\inf_{\sigma\in \calE_+}F_{\eps,\eta}(\sigma),$$
so $\varrho\in \calE_+$ is a minimizer of \fref{minregeta}. Furthermore, one remarks that the application $\varrho\mapsto n[\varrho]$ is linear, so the application
$$\varrho\mapsto \| n[\varrho_k]-n\|^2_{L^2}$$
is convex, and it can be deduced from Lemma \ref{propentropieeps} {\em (iii)} that $F_\eps^\eta$ is strictly convex: the minimizer $\varrho_{\eps,\eta}[n]$ is unique. In the sequel of this proof, $n$ being fixed, we denote shortly $\varrho_{\eps,\eta}$ instead of $\varrho_{\eps,\eta}[n]$. Notice that, from \fref{hammamet}, one gets an estimate independent of the parameter $\eta \in [0,1]$: for all $\eps \in (0,1]$, one has
\be
\label{estimeta}
\sup_{\eta\in [0,1]} \Tr \varrho_{\eps,\eta}+\sup_{\eta\in [0,1]}\Tr \sqrt{H}\varrho_{\eps,\eta}\sqrt{H}<+\infty.
\ee

\bs
\ni
{\em Step 2: differentiation of $F_\eps^\eta$ for $\eta>0$.}  We will use the following lemma, whose proof is in the Appendix.
\begin{lemma} \label{lemdiff} Let $\eta\in (0,1]$. Let $\varrho \in \calE_+$ and let $\omega$ be a trace-class self-adjoint operator. Then, the G\^ateaux derivative of the application 
$$\varrho\mapsto \widetilde F_\eta(\varrho)=\Tr \beta_\eta(\varrho)$$ 
at $\varrho$ in the direction $\omega$ is well-defined and we have
$$
D\widetilde F_\eta(\varrho) (\omega) = \Tr \left(\beta_\eta'( \varrho) \omega\right). 
$$ 
\end{lemma}
Let $h$ be a bounded Hermitian operator. For $\varrho\in \calE_+$, consider the operator $\varrho+t \sqrt{\varrho}\, h \sqrt{\varrho}$. Assume that $t \in [-t_0,t_0]$, with $0<t_0 \| h \| \leq 1$. For such values of $t$ and for all $\varphi \in L^2(0,1)$, we have
\begin{eqnarray*}
(\varrho \varphi,\varphi)+t (\sqrt{\varrho}\, h \sqrt{\varrho} \varphi,\varphi)&=&\|\sqrt{\varrho} \varphi\|_{L^2(\Omega)}^2+
t(h \sqrt{\varrho} \varphi,\sqrt{\varrho }\varphi),\\
& \geq& \|\sqrt{\varrho} \varphi\|_{L^2(\Omega)}^2 (1-|t| \|h\|),\\
 &\geq& 0.
\end{eqnarray*}
Therefore $\varrho+t \sqrt{\varrho}\, h \sqrt{\varrho}$ is nonnegative, self-adjoint and belongs to $\calE_+$ since
$$
 \left| \Tr(\sqrt{H} \sqrt{\varrho} \,h \sqrt{\varrho} \sqrt{H}) \right| \leq \|h\| \|\sqrt{H} \sqrt{\varrho}\|_{\calJ_2}^2 =\|h\| \Tr (\sqrt{H} \varrho \sqrt{H}) < \infty. 
$$
Moreover, we have the following estimates:
$$\|\sqrt{\varrho}\,h\|^2_{\calJ_2}\leq \|h\|^2\|\varrho\|_{\calJ_1},\quad \|\sqrt{H} \sqrt{\varrho}\,h\|^2_{\calJ_2}\leq \|h\|^2 \Tr (\sqrt{H}\varrho \sqrt{H}),$$
$$\Tr (\sqrt{\varrho}\,|h| \sqrt{\varrho})\leq \|h\|\|\varrho\|_{\calJ_1},\quad \Tr (\sqrt{H}\sqrt{\varrho}\,|h| \sqrt{\varrho}\sqrt{H})\leq \|h\| \Tr (\sqrt{H}\varrho \sqrt{H}).$$
Therefore, by linearity, the following equality holds in $W^{1,1}_{per}(0,1)\subset L^\infty(0,1)\subset L^2(0,1)$:
$$n[\varrho+t\sqrt{\varrho}\,h\sqrt{\varrho}]=n[\varrho]+tn[\sqrt{\varrho}\,h\sqrt{\varrho}],$$
which yields, for all $t\neq 0$;
\be
\label{derivn}
\frac{\|n[\varrho+t\sqrt{\varrho}\,h\sqrt{\varrho}]-n\|_{L^2}^2-\|n[\varrho]-n\|_{L^2}^2}{2t}=\int_0^1n[\sqrt{\varrho}\,h\sqrt{\varrho}](x)\left(n[\varrho]-n\right)(x)dx+\calO(t).
\ee
{}From Lemma \ref{lemdiff} and from \fref{derivn}, one deduces the following expression for the G\^ateaux derivative of $F_{\eps,\eta}$ in the direction $ \omega=\sqrt{\varrho_{\eps,\eta}}\,h\sqrt{\varrho_{\eps,\eta}} $:
\be
\begin{array}{l}
\ds \lim_{t\to 0}\frac{F_{\eps,\eta}(\varrho_{\eps,\eta}+t\sqrt{\varrho _{\eps,\eta}}\,h\sqrt{\varrho_{\eps,\eta}})-F_{\eps,\eta}(\varrho_{\eps,\eta})}{t}\\[4mm]
\ds \qquad =\Tr \left(\beta'_\eta(\varrho_{\eps,\eta}) \sqrt{\varrho_{\eps,\eta}}\,h \sqrt{\varrho_{\eps,\eta}} \right)+ \Tr (\sqrt{H} \sqrt{\varrho_{\eps,\eta}}\, h \sqrt{\varrho_{\eps,\eta}} \sqrt{H})+\Tr (A_{\eps,\eta} \sqrt{\varrho_{\eps,\eta}}\,h \sqrt{\varrho_{\eps,\eta}})\\[2mm]
\ds \qquad =\Tr \left(\sqrt{\varrho_{\eps,\eta}}\left(\beta'_\eta(\varrho_{\eps,\eta}) + H +A_{\eps,\eta} \right)\sqrt{\varrho_{\eps,\eta}}\,h\right), 
\end{array}
\label{exprDF}
\ee
where we used the cyclicity of the trace and where we have denoted
\be
\label{Aeps}
A_{\eps,\eta}(x)=\frac{1}{\eps} \left(n[\varrho_{\eps,\eta}]-n\right)(x).
\ee
Note that $A_{\eps,\eta}$ denotes here, with an abuse of notation, either the $L^\infty$ function $A_{\eps,\eta}$, or the operator of multiplication by $A_{\eps,\eta}$ which is a bounded operator. Indeed, $A_{\eps,\eta}$ belongs to $\Hunper\subset L^\infty$ since $n \in \Hunper$ according to the hypotheses and since $\varrho_{\eps,\eta} \in \calE_+$. 
 
Now, we have the tools to conclude: since $\varrho_{\eps,\eta}$ is the minimizer of \fref{minregeta} and since $\varrho_{\eps,\eta}+t\sqrt{\varrho_{\eps,\eta}}h\sqrt{\varrho_{\eps,\eta}}$ belongs to $\calE_+$ for $t$ small enough, the G\^ateaux derivative \fref{exprDF} vanishes and for all $h\in \calL(L^2)$, self-adjoint, for all $\eta\in (0,1]$, we have
 \be
 \label{char1}
\Tr \left(\sqrt{\varrho_{\eps,\eta}}\left(\beta'_\eta(\varrho_{\eps,\eta}) + H +A_{\eps,\eta} \right)\sqrt{\varrho_{\eps,\eta}}\,h\right)=0.
 \ee
 
 \bs
 \ni
 {\em Step 3: convergence of $\varrho_{\eps,\eta}$ as $\eta\to 0$.} From the estimate \fref{estimeta} and from Lemma \ref{strongconv}, one deduces that there exists $\widetilde \varrho\in \calE_+$ (dependent of $\eps$) such that, as $\eta\to 0+$,
\be
\label{coneta}
\varrho_{\eps,\eta}\to\widetilde \varrho\mbox{ in }\calJ_1\quad \mbox{and} \quad \Tr(\sqrt{H}\widetilde \varrho\sqrt{H})\leq \liminf_{\eta\to 0+} \Tr(\sqrt{H}\varrho_{\eps,\eta}\sqrt{H}).
\ee
Then, from Lemma \ref{propentropieeps} {\em (iv)}, one deduces that
\be
\label{coneta2}
\lim_{\eta\to 0} \beta_\eta (\varrho_{\eps,\eta})=\beta_0(\widetilde \varrho).  
\ee
Moreover, one can deduce from \fref{coneta} and from Sobolev embeddings in dimension one, exactly as to prove \fref{con4}, that
$$ \| n[\varrho_\eta]-n\|^2_{L^2}\to  \| n[\widetilde\varrho]-n\|^2_{L^2}$$
as $\eta\to 0$. Together with \fref{coneta} and \fref{coneta2}, this leads to
\be
\label{coneta3}
F_{\eps,0}(\widetilde \varrho)\leq \lim_{\eta\to 0+}F_{\eps,\eta}(\varrho_{\eps,\eta}).
\ee
Moreover, by definition of $\varrho_{\eps,0}$ and $\varrho_{\eps,\eta}$ as minimizers of $F_{\eps,0}$ and $F_{\eps,\eta}$, one has
$$F_{\eps,0}(\varrho_{\eps,0})\leq F_{\eps,0}(\widetilde \varrho)\quad\mbox{and}\quad F_{\eps,\eta}(\varrho_{\eps,\eta})\leq F_{\eps,\eta}(\varrho_{\eps,0}).$$
Applying Lemma \ref{propentropieeps} {\em (iv)}, one gets
$$\lim_{\eta\to 0+}F_{\eps,\eta}(\varrho_{\eps,0})=F_{\eps,0}(\varrho_{\eps,0}),$$
and finally all these limits are equal, since
$$F_{\eps,0}(\varrho_{\eps,0})\leq F_{\eps,0}(\widetilde \varrho)\leq \lim_{\eta\to 0+}F_{\eps,\eta}(\varrho_{\eps,\eta})\leq \lim_{\eta\to 0+}F_{\eps,\eta}(\varrho_{\eps,0})=F_{\eps,0}(\varrho_{\eps,0}).$$
Hence, by uniqueness of the minimizer, we have $\widetilde \varrho=\varrho_{\eps,0}$. Moreover, we deduce also from $F_{\eps,0}(\varrho_{\eps,0})= \lim_{\eta\to 0+}F_{\eps,\eta}(\varrho_{\eps,\eta})$ that
$$\Tr(\sqrt{H}\varrho_{\eps,0}\sqrt{H})= \lim_{\eta\to 0+} \Tr(\sqrt{H}\varrho_{\eps,\eta}\sqrt{H}).$$
Hence, by applying the second part of Lemma \ref{strongconv} we get finally
\be
\label{conveta}
\varrho_{\eps,\eta} \to \varrho_{\eps,0} \mbox{ in }\calJ_1\quad \mbox{and}\quad  \sqrt{H}\sqrt{\varrho_{\eps,\eta}}\to \sqrt{H}\sqrt{\varrho_{\eps,0}}\mbox{ in }\calJ_2\quad \mbox{as }\eta\to 0.
\ee

Now we have the tools to pass to the limit in \fref{char1} as $\eta \to 0+$. First, let us prove that, for all bounded operator $h$,
\be
\label{c1}
\lim_{\eta\to 0}\Tr \left(\beta'_\eta(\varrho_{\eps,\eta}) \varrho_{\eps,\eta}\,h\right)=\Tr \left(\beta'_0(\varrho_{\eps,0})\varrho_{\eps,0}\,h\right).
\ee
To this aim, we introduce a parameter $\kappa>0$ and decompose
\be
\label{rr}
\Tr \left(\beta'_\eta(\varrho_{\eps,\eta})\varrho_{\eps,\eta}\,h\right)=\Tr \left(\un_{\varrho_{\eps,\eta}\geq \kappa}\,\beta'_\eta(\varrho_{\eps,\eta})\varrho_{\eps,\eta}\,h\right)+\Tr \left(\un_{\varrho_{\eps,\eta}< \kappa}\,\beta'_\eta(\varrho_{\eps,\eta})\varrho_{\eps,\eta}\,h\right).
\ee
Since $\beta_\eta'(s)=\log(s+\eta)$, on all interval $[\kappa,M]$ with $0<\kappa<M$, one has 
$$\lim_{\eta\to 0}\max_{s\in [\kappa,M]}|\beta_\eta'(s)-\beta'_0(s)|=0\quad \mbox{and}\quad
|s\beta'_\eta(s)-s\beta'_0(s)|\leq Cs.
$$
The first term in this decomposition \fref{rr} can thus be uniformly approximated, for $M$ large enough:
$$
\begin{array}{l}
\ds \left|\Tr \left(\un_{\varrho_{\eps,\eta}\geq \kappa}\,\beta'_\eta(\varrho_{\eps,\eta})\varrho_{\eps,\eta}\,h\right)-\Tr \left(\un_{\varrho_{\eps,0}\geq \kappa}\,\beta'_0(\varrho_{\eps,0})\varrho_{\eps,0}\,h\right)\right|\\[2mm]
\ds \qquad \leq 
\|\varrho_{\eps,\eta}\|_{\calJ_1}\|h\|\max_{s\in [\kappa,M]}|\beta_\eta'(s)-\beta'_0(s)|+C_\kappa\|\varrho_{\eps,\eta}-\varrho_{\eps,0}\|_{\calJ_1}\|h\|,
\end{array}
$$
and converges to 0 as $\eta\to 0$ (for all fixed $\kappa>0$). Consider now the second term in the right-hand side of \fref{rr}. We have a uniform bound $s^{1/8}\beta'_\eta(s)\leq R$ for $s\in (0,M]$, so 
\bee
\Tr \left(\un_{\varrho_{\eps,\eta}< \kappa}\,\beta'_\eta(\varrho_{\eps,\eta})\varrho_{\eps,\eta}\,h\right)
&\leq& R\|h\|\,\|\un_{\varrho_{\eps,\eta}< \kappa}\,(\varrho_{\eps,\eta})^{7/8}\|_{\calJ_1}\\
&\leq & CR\|h\|\kappa^{1/8}\sum_{\lambda_p<\kappa}(\lambda_p)^{3/4}\\
&\leq & CR\|h\|\kappa^{1/8}\left(\sum_{p\geq 1}\frac{1}{p^{6}}\right)^{1/4},
\eee
where we used again the bound \fref{ineq2} for the eigenvalues $\lambda_p$ of $\varrho_{\eps,\eta}$, together with the estimate \fref{estimeta}. Hence
$$\lim_{\kappa\to 0}\sup_{\eta\in (0,1]}\left|\Tr \left(\un_{\varrho_{\eps,\eta}< \kappa}\,\beta'_\eta(\varrho_{\eps,\eta})\varrho_{\eps,\eta}\,h\right)\right|= 0.$$
This ends the proof of \fref{c1}.

Second, by \fref{sqrt} and by Sobolev embedding, we have
$$A_{\eps,\eta}(x)=\frac{1}{\eps} \left(n[\varrho_{\eps,\eta}]-n\right)(x)\to A_{\eps,0}(x):=\frac{1}{\eps} \left(n[\varrho_{\eps,0}]-n\right)(x)$$
in the $L^\infty(0,1)$ topology. Hence, the corresponding multiplication operators satisfy
$$A_{\eps,\eta}\to A_{\eps,0}\mbox{ in }\calL(L^2(0,1))$$
and the convergence of $\sqrt{\varrho_{\eps,\eta}}$ in $\calJ_2$ yields
\be
\label{c2}
\lim_{\eta\to 0}\Tr \left(\sqrt{\varrho_{\eps,\eta}}\,A_{\eps,\eta}\,\sqrt{\varrho_{\eps,\eta}}\,h\right)=\Tr \left(\sqrt{\varrho_{\eps,0}}\,A_{\eps,0}\,\sqrt{\varrho_{\eps,0}}\,h\right).
\ee

Third, the convergence of $\sqrt{H}\sqrt{\varrho_{\eps,\eta}}$ in $\calJ_2$ yields
\be
\label{c3}
\lim_{\eta\to 0}\Tr \left(\sqrt{\varrho_{\eps,\eta}}\sqrt{H}\,\sqrt{H}\,\sqrt{\varrho_{\eps,\eta}}\,h\right)=\Tr \left(\sqrt{\varrho_{\eps,0}}\sqrt{H}\,\sqrt{H}\,\sqrt{\varrho_{\eps,0}}\,h\right).
\ee
Finally, one can pass to the limit in \fref{char1} and \fref{c1}, \fref{c2}, \fref{c3} give, for all $h\in \calL(L^2)$ self-adjoint,
$$
\Tr \left(\sqrt{\varrho_\eps}\left(\log(\varrho_\eps) + H +A_\eps \right)\sqrt{\varrho_\eps}\,h\right)=0.
$$
where we have denoted $\varrho_\eps=\varrho_{\eps,0}$ and $A_\eps=A_{\eps,0}$. This means that
\be
 \label{char2}
\sqrt{\varrho_\eps}\left(\log(\varrho_\eps) + H +A_\eps \right)\sqrt{\varrho_\eps}=0.
 \ee

 \bs
 \ni
 {\em Step 4: the kernel of $\varrho_\eps$ is $\{0\}$.} In this step, we will prove that, for all $\eps \in (0,1]$, the kernel of the minimizer $\varrho_\eps$ of $F_\eps$ is $\{0\}$.

Let us prove this result by contradiction. Assume that the kernel of $\varrho_\eps$ is not $\{0\}$ and pick a basis function $\phi \in \mbox{Ker} \varrho_\eps$. We first complete $\phi$ into an orthonormal basis $\{\phi, (\psi_p)_{p\in I}\}$ of $\mbox{Ker}\varrho_\eps$ ($I$ may be empty, finite or infinite). Then, we denote by $(\lambda_p)_{1\leq p\leq N}$ the nonincreasing sequence of nonzero eigenvalues of $\varrho_\eps$ (here $N$ is finite or not), associated to the orthonormal family of eigenfunctions $(\phi_p)_{1\leq p\leq N}$. We thus obtain a Hilbert basis $\{\phi, (\psi_p)_{p\in I}, (\phi_p)_{1\leq p\leq  N}\}$ of $L^2(0,1)$. Since it is not clear whether $\phi$ belongs to $\Hunper$, let us regularize it by setting
$$\phi^\alpha=(1+\alpha\sqrt{H})^{-1}\phi,$$
where $\alpha>0$ is a small parameter. We have $\phi^\alpha\in \Hunper$ and, as in the proof of Lemma \ref{strongconv},
$$\lim_{\alpha\to 0}\phi^\alpha\to \phi$$
in $L^2(0,1)$. We simply fix $\alpha>0$ such that $|(\phi^\alpha,\phi)|>1/2$. Denote by $P^\alpha$ the orthogonal projection
$$
P^\alpha \varphi := \phi^\alpha\, (\phi^\alpha, \varphi), \qquad \forall \varphi \in L^2(0,1),
$$
and consider the positive operator $\varrho(t)=\varrho_\eps +t P^\alpha$ for $t>0$. From $\phi^\alpha\in \Hunper$, we deduce that the operator $P^\alpha$ belongs to $\calE_+$. We shall prove that there exists $t>0$ such that
\be
\label{co}
F_\eps(\varrho(t))<F_\eps(\varrho_\eps),
\ee
which is a contradiction.

Let $\eta>0$ and denote as before $\beta(s)=s\log s -s$ and $\beta_\eta(s)=(s+\eta) \log (s+\eta)-s-\eta\log \eta$. From the min-max principle and from the positivity of the operator $P^\alpha$, one deduces that
$$\forall p\in \NN^*,\quad \lambda_p(\varrho(t))\geq \lambda_p(\varrho_\eps),$$
where $\lambda_p(\cdot)$ denotes the $p$-th eigenvalue of the operator. Hence, we have
\bee
\beta(\lambda_p(\varrho(t))-\beta(\lambda_p(\varrho_\eps))&=&\int_{\lambda_p(\varrho_\eps)}^{\lambda_p(\varrho(t))}\log(s)ds\\
&\leq&\int_{\lambda_p(\varrho_\eps)}^{\lambda_p(\varrho(t))}\log(s+\eta)ds\\
&=&\beta_\eta(\lambda_p(\varrho(t))-\beta_\eta(\lambda_p(\varrho_\eps)),
\eee
which implies
$$\Tr(\beta(\varrho(t))-\Tr(\beta(\varrho_\eps))\leq \Tr(\beta_\eta(\varrho(t))-\Tr(\beta_\eta(\varrho_\eps))$$
and then
$$F_\eps(\varrho(t))-F_\eps(\varrho_\eps)\leq F_{\eps,\eta}(\varrho(t))-F_{\eps,\eta}(\varrho_\eps).$$
Therefore, to prove \fref{co}, it suffices to find $\eta>0$ and $t>0$  such that
\be
\label{co2}
F_{\eps,\eta}(\varrho(t))<F_{\eps,\eta}(\varrho_\eps).
\ee
Since $P^\alpha$ belongs to $\calE_+$ and by Lemma \ref{lemdiff}, for all $\eta>0$ one can differentiate $F_{\eps,\eta}$ at $\varrho_\eps$ in the direction $P^\alpha$ and one has
$$
\lim_{t\to 0}\frac{F_{\eps,\eta}(\varrho(t))-F_{\eps,\eta}(\varrho_\eps)}{t} =\Tr \left(\log( \varrho_\eps+\eta) P^\alpha \right)+ \Tr (\sqrt{H} P^\alpha \sqrt{H})+\Tr (A_\eps P^\alpha).
$$
One has
\bee
\Tr \left(\log( \varrho_\eps+\eta) P^\alpha \right)
&=&|(\phi^\alpha,\phi)|^2\log \eta+\sum_{p=1}^N|(\phi^\alpha,\phi_p)|^2\log(\lambda_p+\eta)\\
&&+\sum_{p\in I}|(\phi^\alpha,\psi_p)|^2\log\eta,
\eee
hence, by using $|(\phi^\alpha,\phi)|>1/2$, one obtains for $0<\eta<1/2$
$$
\Tr \left(\log( \varrho_\eps+\eta) P^\alpha \right)
\leq\frac{1}{4}\log \eta+\sum_{p=1}^{p_0}|(\phi^\alpha,\phi_p)|^2\log(\lambda_p+\frac{1}{2}),
$$
where $p_0$ has been chosen such that $\lambda_p<1/2$ for $p>p_0$. Therefore,  there exists a constant $C_{\eps,\alpha}$ independent of $\eta$ (but depending on $\eps$ and $\alpha$) such that
$$
\lim_{t\to 0}\frac{F_{\eps,\eta}(\varrho(t))-F_{\eps,\eta}(\varrho_\eps)}{t} \leq \frac{1}{4}\log \eta+C_{\eps,\alpha}.
$$
To conclude, it suffices to choose $\eta$ small enough such that $\frac{1}{4}\log \eta+C_{\eps,\alpha}<0$. Then one has
$$
\lim_{t\to 0}\frac{F_{\eps,\eta}(\varrho(t))-F_{\eps,\eta}(\varrho_\eps)}{t} <0,
$$
and for $t$ small enough one has \fref{co2}, which leads to a contradiction. This ends the proof of the claim. 

 \bs
 \ni
 {\em Step 5: identification of $\varrho_\eps$.} Notice that, since $A_\eps\in \Hunper\subset L^\infty(0,1)$, the operator $H_A:=H+A_\eps$ with domain $D(H)$ is bounded from below and has a compact resolvent. Denote by $(\lambda_p^\eps,\phi_p^\eps)_{p\in \NN^*}$ the eigenvalues and eigenfunctions of $\varrho_\eps$. From the previous step, we know that, for all $p$, we have $\lambda_p^\eps>0$. Moreover, $(\phi_p^\eps)_{p\in \NN^*}$ is a Hilbert basis of $L^2(0,1)$. We will prove in this step that $(\phi_p^\eps)_{p\in \NN^*}$ is a complete family of eigenfunctions of $H_A$ associated to the eigenvalues $-\log \lambda_p^\eps$.

 Apply \fref{char2} to $\phi_p^\eps$. Since from Step 4 we know that $\lambda_p^\eps>0$, we obtain
 $$\sqrt{\varrho_\eps}\left(\log(\lambda_p^\eps) + H +A_\eps \right)\phi_p^\eps=0.$$
Remark that, since $\sqrt{H}\sqrt{\varrho_\eps}$ is bounded (with adjoint operator $\sqrt{\varrho_\eps}\sqrt{H}$), we know that $\phi_p^\eps$ belongs to $\Hunper$.
Taking the $L^2$ scalar product of the above equation with $\phi_q^\eps$ leads to
\bee
0&=&\left(\sqrt{\varrho_\eps}\left(\log(\lambda_p^\eps) + H +A_\eps \right)\phi_p^\eps,\phi_q^\eps\right)\\
&=&\sqrt{\lambda_q^\eps}\log(\lambda_p^\eps)\delta_{pq}+\left(\sqrt{\varrho_\eps}\sqrt{H}\sqrt{H}\phi_p^\eps,\phi_q^\eps\right)+\left(\sqrt{\varrho_\eps}\,A_\eps\,\phi_p^\eps,\phi_q^\eps\right)\\
&=&\sqrt{\lambda_q^\eps}\log(\lambda_p^\eps)\delta_{pq}+\left(\sqrt{H}\phi_p^\eps,\sqrt{H}\sqrt{\varrho_\eps}\phi_q^\eps\right)+\left(A_\eps\,\phi_p^\eps,\sqrt{\varrho_\eps}\phi_q^\eps\right)\\
&=&\sqrt{\lambda_q^\eps}\left(\log(\lambda_p^\eps)\delta_{pq}+\left(\sqrt{H}\phi_p^\eps,\sqrt{H}\phi_q^\eps\right)+\left(A_\eps\,\phi_p^\eps,\phi_q^\eps\right)\right).
\eee
Hence, for all $p,q\in \NN^*$,
\be
\label{orth}
\left(\sqrt{H}\phi_p^\eps,\sqrt{H}\phi_q^\eps\right)+\left(A_\eps\,\phi_p^\eps,\phi_q^\eps\right)=-\log(\lambda_p^\eps)\delta_{pq}.
\ee
The family $(\phi_p^\eps)_{p\in \NN^*}$ is thus an orthogonal family for the following sesquilinear form associated to $H_A$:
$$Q_A(u,v)=\left(\sqrt{H}u,\sqrt{H}v\right)+\left(A_\eps\,u,v\right).$$
Note that, since $A_\eps\in L^\infty$, there exists two constants $m,M>0$ such that
\be
\label{QA}
\forall u\in \Hunper,\qquad \frac{1}{M}\|u\|_{\H^1}^2\leq Q_A(u,u)+m\|u\|_{L^2}^2\leq M\|u\|_{\H^1}^2.
\ee

Let us now prove that this family $(\phi_p^\eps)_{p\in \NN^*}$ is dense in $\Hunper$. Let $\phi\in \Hunper$. We already know that the following series:
$$\phi_N=\sum_{p=1}^N (\phi_p^\eps,\phi)\phi_p^\eps$$
converges in $L^2(0,1)$ to $\phi$ as $N\to+\infty$. We will prove that in fact this series converges in $\H^1$ which, by \fref{QA}, is equivalent to saying that
\be
\label{lim}
Q_A(\phi,\phi)=\lim_{N\to +\infty}Q_A(\phi_N,\phi_N).
\ee
Again, the key argument of the proof will be the fact that $\varrho_\eps$ is the minimizer of $F_\eps$: for all $t>0$, we have
\be
\label{cont}
0\leq F_\eps(\varrho_\eps +t P)-F_\eps(\varrho_\eps)
\ee
where  $P$ denotes the orthogonal projection on $\phi$:
$$
P u := \phi\, (\phi, u), \qquad \forall u \in L^2(0,1).
$$
Indeed, $\phi\in \Hunper$ implies that $P\in \calE_+$, thus $\widetilde \varrho(t):=\varrho_\eps+tP$ belongs to $\calE_+$ for all $t>0$. Now, as in the previous Step 4, one can prove that, for all $\eta>0$, we have
$$
0\leq F_\eps(\varrho_\eps +t P)-F_\eps(\varrho_\eps)\leq F_{\eps,\eta}(\varrho_\eps +t P)-F_{\eps,\eta}(\varrho_\eps)
$$
and
\bee
\lim_{t\to 0}\frac{F_{\eps,\eta}(\varrho_\eps +t P)-F_{\eps,\eta}(\varrho_\eps)}{t} 
&=&\Tr \left(\log(\varrho_\eps+\eta) P \right)+ \Tr (\sqrt{H} P \sqrt{H})+\Tr (A_\eps P)\\
&=&\sum_{p\in \NN^*}|(\phi,\phi_p^\eps)|^2\log(\lambda_p^\eps+\eta)+Q_A(\phi,\phi).
\eee
Therefore, for all $\eta>0$, one has
\be
\label{co3}
-\sum_{p\in \NN^*}|(\phi,\phi_p^\eps)|^2\log(\lambda_p^\eps+\eta)\leq Q_A(\phi,\phi).
\ee
Let $N\in \NN^*$ large enough, such that $\log( \lambda_N^\eps)<0$. For $\eta>0$ small enough, one has
$$\forall p\geq N,\qquad \log(\lambda_p^\eps+\eta)\leq 0,$$
thus \fref{co3} yields
$$-\sum_{p=1}^N|(\phi,\phi_p^\eps)|^2\log(\lambda_p^\eps+\eta)\leq Q_A(\phi,\phi).$$
Since we know that $\lambda_p^\eps>0$ for all $p$, one can pass to the limit in this inequality as $\eta\to 0$:
$$-\sum_{p=1}^N|(\phi,\phi_p^\eps)|^2\log\lambda_p^\eps\leq Q_A(\phi,\phi).$$
Remarking that, by the orthogonality property \fref{orth}, one has
$$Q_A(\phi_N,\phi_N)=-\sum_{p=1}^N |(\phi,\phi_p^\eps)|^2\log \lambda_p^\eps,$$
this inequality reads
\be
\label{leq}
Q_A(\phi_N,\phi_N)\leq Q_A(\phi,\phi).
\ee
In particular, this means that $(\phi_N)$ is a bounded sequence of $\Hunper$, thus converges weakly to $\phi$ in $\H^1$ as $N\to +\infty$. From the equivalence of norms \fref{QA}, we then deduce that
$$Q_A(\phi,\phi)\leq \liminf_{N\to +\infty}Q_A(\phi_N,\phi_N).$$
Together with \fref{leq}, we get \fref{lim} and our claim is proved: $\phi_N$ converges to $\phi$ in the $\H^1$ strong topology and the family $(\phi_p^\eps)_{p\in \NN^*}$ is dense in $\Hunper$.

This enables to conclude the proof. Indeed, this density property implies that \fref{orth} is equivalent to
\be
\label{QAphi}
\forall \phi\in \Hunper,\qquad \left(\sqrt{H}\phi_p^\eps,\sqrt{H}\phi\right)+\left(A_\eps\,\phi_p^\eps,\phi\right)=-(\phi_p^\eps,\phi) \log \lambda_p^\eps.
\ee
This means that $(\phi_p^\eps)_{p\in \NN^*}$ is a complete family of eigenfunctions of $H_A$ (still identified with the associated quadratic form) and that the associated eigenvalues of $H_A$ are $-\log \lambda_p^\eps$. In other words, we have
$$\varrho_\eps =\exp \left(-H_A\right)$$
in the sense of functional calculus. The proof of Proposition \ref{lpropminreg} is complete.
\end{proof}

\subsection{Passing to the limit}
\label{subse}
In this subsection, we terminate the proof of our main Theorem \ref{theo1}. Let $n(x)$ such that $n>0$ on $[0,1]$ and $\sqrt{n}\in\Hunper$. By a Sobolev embedding in dimension one, $\sqrt{n}$ is continuous on $[0,1]$, so that we have
\be
\label{minn}
0<m:=\min_{x\in[0,1]}n(x).
\ee
Propositions \ref{propmin2} and \ref{lpropminreg} define the unique minimizers $\varrho[n]$ and $\varrho_\eps[n]$ -- shortly denoted $\varrho$ and $\varrho_\eps$ here -- of the minimization problems \fref{min2} and \fref{minreg}. Moreover, $\varrho_\eps$ takes the form \fref{charreg}, with $$A_\eps=\frac{1}{\eps}(n^\eps-n)\in \Hunper,$$
where we have denoted $n_\eps:=n[\varrho_\eps]$. Let us study successively the limits of $\varrho_\eps$, $n_\eps$ and $A_\eps$ as $\eps \to 0$.

\bs
\ni
{\em Step 1: convergence of $\varrho_\eps$.} Recall that, for all $\sigma\in \calE_+$, we have
$$F_\eps(\sigma)=F(\sigma)+\frac{1}{2\eps}\|n[\sigma]-n\|_{L^2}^2$$
and that $n[\varrho]=n$. Hence, by definition of $\varrho_\eps$, we have
\bea
F(\varrho_\eps)\leq F_\eps(\varrho_\eps)&=&\Tr(\varrho_\eps\log \varrho_\eps-\varrho_\eps)+\Tr \sqrt{H}\varrho_\eps\sqrt{H}+\frac{1}{2\eps}\|n[\varrho_\eps]-n\|_{L^2}^2\nonumber \\
&\leq& F_\eps(\varrho)=F(\varrho).\label{compar}
\eea
Therefore, one deduces from the estimate \fref{souslin} that $\Tr \sqrt{H}\varrho_\eps\sqrt{H}$ is bounded independently of $\eps$ and that $n[\varrho_\eps]$ converges to $n$ in $L^2(0,1)$. In particular, by Cauchy-Schwarz, we obtain
$$
\left|\Tr \varrho_\eps-\Tr\varrho\right|=\left|\int_0^1 (n[\varrho_\eps]-n)(x)dx\right|\leq  \|n[\varrho_\eps]-n\|_{L^2}\to 0\quad \mbox{as }\eps \to 0.
$$
The family $\varrho_\eps$ is thus bounded in $\calE_+$ independently of $\eps$ and then, by Lemma \ref{strongconv}, there exists $\widetilde \varrho\in \calE_+$ such that
\be
\label{conetasuite}
\varrho_\eps\to\widetilde \varrho\mbox{ in }\calJ_1\quad \mbox{and} \quad \Tr(\sqrt{H}\widetilde \varrho\sqrt{H})\leq \liminf_{\eps\to 0} \Tr(\sqrt{H}\varrho_\eps\sqrt{H}).
\ee
Therefore, by Lemma \ref{propentropie} {\em (ii)}, by the expression \fref{F} of $F$ and by \fref{compar}, one gets
\be
\label{compar2}
F(\widetilde \varrho)\leq \liminf_{\eps\to 0}F(\varrho_\eps)\leq F(\varrho).
\ee
Furthermore, we have $n[\widetilde \varrho]=n$. Indeed, the strong $\calJ_1$ convergence of $\varrho_\eps$ implies the weak $\calJ_1$ convergence, thus
$$\forall \varphi\in L^\infty(0,1),\quad \int_0^1 n[\varrho_\eps](x)\varphi(x)dx=\Tr (\varrho_\eps\,\varphi)\to \Tr (\widetilde \varrho\,\varphi)=\int_0^1 n[\widetilde \varrho](x)\varphi(x)dx$$
and we already know that $n[\varrho_\eps]$ converges to $n$ in $L^2(0,1)$, so 
$$\forall \varphi\in L^\infty(0,1),\quad \int_0^1 n[\widetilde \varrho](x)\varphi(x)dx=\int_0^1 n(x)\varphi(x)dx$$
and then $n[\widetilde \varrho]=n$.

Finally, $\widetilde \varrho$ is a minimizer of \fref{min2} and the uniqueness of this minimizer proved in Proposition \ref{propmin2} yields $\widetilde \varrho=\varrho$. Moreover, one has $F(\varrho_\eps)\to F(\varrho)$, so
\bee
\lim_{\eps\to 0}\Tr(\sqrt{H}\varrho_\eps\sqrt{H})&=&\lim_{\eps \to 0}F(\varrho_\eps)-\lim_{\eps \to 0}\Tr (\varrho_\eps\log\varrho_\eps-\varrho_\eps)\\
&=& F(\varrho)-\Tr (\varrho\log\varrho-\varrho)=\Tr(\sqrt{H}\varrho\sqrt{H}).
\eee
The second part of Lemma \ref{strongconv} can thus be applied and one has finally
\be
\label{convetasuite}
\varrho_\eps \to \varrho \mbox{ in }\calJ_1\quad \mbox{and}\quad  \sqrt{H}\sqrt{\varrho_\eps}\to \sqrt{H}\sqrt{\varrho}\mbox{ in }\calJ_2\quad \mbox{as }\eps\to 0.
\ee
Notice that, from Lemma \ref{strongconv}, we also have
\be
\label{convetasuite2}
\sqrt{\varrho_\eps}\to \sqrt{\varrho}\mbox{ in }\calJ_2\quad \mbox{as }\eps\to 0.
\ee

\bs
\ni
{\em Step 2: convergence of $n_\eps:=n[\varrho_\eps]$.} In this step, we will prove that
\be
\label{convnH1}
\lim_{\eps\to 0}\|n_\eps-n\|_{\H^1}=0.
\ee
{}From the previous step, we know that $\varrho_\eps\to \varrho$ in $\calJ_1$. According to Lemma \ref{convFP}, for the sequel of the proof, let us choose some eigenbasis $(\phi_p^\eps)_{p\in\NN^*}$ and $(\phi_p)_{p\in\NN^*}$, respectively of $\varrho_\eps$ and $\varrho$ and such that
\be
\label{convFP2}
\forall p\in \NN^*,\quad \lim_{\eps\to 0}\|\phi_p^\eps-\phi_p\|_{L^2}=0.
\ee
We claim that
\bea
\label{cl1}
\forall p\in \NN^*\quad &&\lim_{\eps\to 0}\left\|\sqrt{\varrho_\eps}\phi_p^\eps-\sqrt{\varrho}\phi_p \right\|_{L^2}=0,\\
\label{cl2}
\mbox{and}&&\lim_{\eps\to 0}\left\|\sqrt{H}\sqrt{\varrho_\eps}\phi_p^\eps-\sqrt{H}\sqrt{\varrho}\phi_p \right\|_{L^2}=0.
\eea
Indeed, to prove \fref{cl1}, it suffices to write 
\bee
\left\|\sqrt{\varrho_\eps}\phi_p^\eps-\sqrt{\varrho}\phi_p \right\|_{L^2}
&\leq&\left\|\sqrt{\varrho_\eps}(\phi_p^\eps-\phi_p) \right\|_{L^2}+\left\|(\sqrt{\varrho_\eps}-\sqrt{\varrho})\phi_p\right\|_{L^2}\\
&\leq&\|\sqrt{\varrho_\eps}\|_{\calL(L^2)}\left\|\phi_p^\eps-\phi_p \right\|_{L^2}+\left\|\sqrt{\varrho_\eps}-\sqrt{\varrho}\right\|_{\calL(L^2)}\\
&\leq&\|\sqrt{\varrho_\eps}\|_{\calJ_2}\left\|\phi_p^\eps-\phi_p \right\|_{L^2}+\left\|\sqrt{\varrho_\eps}-\sqrt{\varrho}\right\|_{\calJ_2}
\eee
then to use \fref{convetasuite2} and \fref{convFP2}. To prove \fref{cl2}, one writes similarly
$$
\left\|\sqrt{H}\sqrt{\varrho_\eps}\phi_p^\eps-\sqrt{H}\sqrt{\varrho}\phi_p \right\|_{L^2}
\leq\|\sqrt{H}\sqrt{\varrho_\eps}\|_{\calJ_2}\left\|\phi_p^\eps-\phi_p \right\|_{L^2}+\left\|\sqrt{H}\sqrt{\varrho_\eps}-\sqrt{H}\sqrt{\varrho}\right\|_{\calJ_2}
$$
then use \fref{convetasuite} and \fref{convFP2}.

Let us prove \fref{convnH1}. We already have $n_\eps\to n$ in $L^2(0,1)$, so it remains to prove the convergence of $\frac{dn_\eps}{dx}$ in $L^2$. One has
$$n_\eps=\sum_{p\in \NN^*}\lambda_p^\eps|\phi_p^\eps|^2,\qquad n=\sum_{p\in \NN^*}\lambda_p|\phi_p|^2,$$
thus
\bee
\left|\frac{dn_\eps}{dx}-\frac{dn}{dx}\right|&\leq &2 \sum_{p \in \NN^*}\left|\sqrt{\lambda_p^\eps} \frac{d}{dx} \phi_p^\eps -\sqrt{\lambda_p} \frac{d}{dx} \phi_p\right| \left| \sqrt{\lambda_p^\eps} \phi_p^\eps\right|\\
&&+2 \sum_{p \in \NN^*}\left|\sqrt{\lambda_p} \frac{d}{dx}\phi_p\right| \left| \sqrt{\lambda_p^\eps} \phi_p^\eps -\sqrt{\lambda_p} \phi_p\right|.\end{eqnarray*}
Therefore, from \fref{ident} and Cauchy-Schwarz, one gets
$$\left\|\frac{dn_\eps}{dx}-\frac{dn}{dx}\right\|_{L^2}\leq s_1^\eps+s_2^\eps$$
with
\bea
s_1^\eps&=&\sum_{p\in\NN^*}\left\|(\sqrt{H}\sqrt{\varrho_\eps})\phi_p^\eps-(\sqrt{H}\sqrt{\varrho})\phi_p \right\|_{L^2}\left\|\sqrt{\varrho_\eps}\phi_p^\eps\right\|_{L^\infty}\label{s1}\\
s_2^\eps&=&\sum_{p\in\NN^*}\left\|(\sqrt{H}\sqrt{\varrho})\phi_p\right\|_{L^2}\left\|\sqrt{\varrho_\eps}\phi_p^\eps-\sqrt{\varrho}\phi_p\right\|_{L^\infty}.\label{s2}
\eea
Let us use again \fref{ineq2}. From the Gagliardo-Nirenberg inequality, one deduces that, for all $N\in \NN^*$,
\bee
\sum_{p\geq N}\lambda_p^\eps \| \phi_p^\eps \|^2_{L^\infty}&\leq &\sum_{p\geq N}\sqrt{\lambda_p^\eps} \| \phi_p^\eps \|_{L^2}\| \sqrt{\lambda_p^\eps} \sqrt{H}\phi_p^\eps \|_{L^2}\\
&\leq &\left(\sum_{p\geq N}\lambda_p^\eps \right)^{1/2}\left(\Tr(\sqrt{H}\varrho_\eps\sqrt{H}\right)^{1/2}\\
&\leq &C\|\varrho_\eps\|^{1/4}_{\calL(L^2)}\left(\sum_{p\geq N}p^2\lambda_p^\eps\right)^{1/4}\left(\sum_{p\geq N}\frac{1}{p^2}\right)^{1/4}\left(\Tr(\sqrt{H}\varrho_\eps\sqrt{H}\right)^{1/2}\\
&\leq& \frac{C}{N^{1/4}}(\Tr \varrho_\eps)^{1/4}\left(\Tr \sqrt{H}\varrho_\eps \sqrt{H}\right)^{3/4}\leq \frac{C}{N^{1/4}},
\eee
and similarly
$$\sum_{p\geq N}\lambda_p \| \phi_p \|^2_{L^\infty} \leq \frac{C}{N^{1/4}}.
$$
Thus, for all $N\in \NN^*$
\begin{eqnarray*}
| s_1^\eps|^2&\leq& 2 \left(\sum_{p \in \NN^*} \lambda_p^\eps \| \phi_p^\eps \|^2_{L^\infty}\right)\left(\sum_{p<N} 
\|\sqrt{H}\sqrt{\rho_\eps}  \phi_p^\eps -\sqrt{H}\sqrt{\rho}\phi_p\|^2_{L^2}\right) \\
&&+\frac{C}{N^{1/4}}\sum_{p\geq N} 
\left(\|\sqrt{H}\sqrt{\rho_\eps}  \phi_p^\eps\|_{L^2}^2+\|\sqrt{H}\sqrt{\rho}\phi_p\|^2_{L^2}\right)\\
&\leq&  C\sum_{p<N} 
\|\sqrt{H}\sqrt{\rho_\eps}  \phi_p^\eps -\sqrt{H}\sqrt{\rho}\phi_p\|^2_{L^2}+\frac{C}{N^{1/4}}
\end{eqnarray*}
and from \fref{cl2} one deduces that
\be
\label{convs1}
\lim_{\eps\to 0}| s_1^\eps|^2=0.
\ee
Furthermore,
\begin{eqnarray*}
| s_2^\eps|^2&\leq& \left(\sum_{p \in \NN^*}\|\sqrt{H}\sqrt{\varrho}\phi_p \|^2_{L^2}\right)\left(\sum_{p<N} 
\|\sqrt{\rho_\eps}  \phi_p^\eps -\sqrt{\rho}\phi_p\|^2_{L^\infty}\right)  \\
&&+\left(\sum_{p\in \NN^*}\|\sqrt{H}\sqrt{\varrho}\phi_p\|^2_{L^2}\right)\left(\sum_{p\geq N} 
\left(\|\sqrt{\rho_\eps}  \phi_p^\eps\|_{L^\infty}^2+\|\sqrt{\rho}\phi_p\|^2_{L^\infty}\right)\right)\\
&\leq& C\left(\sum_{p \in \NN^*}\|\sqrt{H}\sqrt{\varrho}\phi_p \|^2_{L^2}\right)\left(\sum_{p<N} \|\sqrt{\rho_\eps}  \phi_p^\eps -\sqrt{\rho}\phi_p\|^2_{L^2}\right)^{1/2}\times\\
&&\times\left(\sum_{p<N} \|\sqrt{H}\sqrt{\rho_\eps}  \phi_p^\eps -\sqrt{H}\sqrt{\rho}\phi_p\|^2_{L^2}\right)^{1/2}+\frac{C}{N^{1/4}}\sum_{p\in \NN^*}\|\sqrt{H}\sqrt{\varrho}\phi_p\|^2_{L^2}
\end{eqnarray*}
where we used again the Gagliardo-Nirenberg inequality. Hence, from \fref{cl2}, one deduces that
\be
\label{convs2}
\lim_{\eps\to 0}| s_2^\eps|^2=0.
\ee
The convergence \fref{convnH1} of $n_\eps$ is proved.

\bs
\ni
{\em Step 3: convergence of $A_\eps$.} By a Sobolev embedding in dimension one, $\Hunper$ is a Banach algebra: for all $u,v \in \Hunper$ the product $uv$ also belongs to $\Hunper$. Hence, from \fref{QAphi}, one gets
$$
\forall p\in \NN^*,\,\,\forall \phi\in \Hunper,\quad \left(\sqrt{H}\phi_p^\eps,\sqrt{H}(\phi_p^\eps\phi)\right)+\int_0^1A_\eps\,|\phi_p^\eps|^2\phi dx=-(\phi_p^\eps,\phi_p^\eps\phi) \log \lambda_p^\eps.
$$
Multiply this identity by $\lambda_p^\eps$ and sum up on $p$. Since $n_\eps=\sum_p\lambda_p^\eps|\phi_p^\eps|^2$, we obtain that, for all $\phi\in \Hunper$,
\bea
\int_0^1A_\eps n_\eps\phi dx
&=&-\sum_{p\in\NN^*}\left(\phi_p^\eps,\phi (\varrho_\eps\log \varrho_\eps) \phi_p^\eps\right)-\sum_{p\in\NN^*}\left(\sqrt{H}\sqrt{\varrho_\eps}\phi_p^\eps,\sqrt{H}(\phi\sqrt{\varrho_\eps}\phi_p^\eps)\right)\nonumber\\
&=&-\Tr(\phi (\varrho_\eps\log \varrho_\eps))-\sum_{p\in\NN^*}\left(\sqrt{H}\sqrt{\varrho_\eps}\phi_p^\eps,\sqrt{H}(\phi\sqrt{\varrho_\eps}\phi_p^\eps)\right).
\label{seul}
\eea
Let us examinate separately the convergence of the two terms in the right hand-side. From \fref{convetasuite}, Lemma \ref{propentropie} {\em (ii)}, and from
$$\left|\Tr(\phi (\varrho_\eps\log \varrho_\eps))-\Tr(\phi (\varrho\log \varrho))\right|\leq \|\phi\|_{L^\infty}\|\varrho_\eps\log \varrho_\eps-\varrho\log \varrho\|_{\calJ_1}$$
one has
\be
\label{weak1}
\lim_{\eps\to 0}\sup_{\|\phi\|_{\H^1}\leq 1}\left|\Tr(\phi (\varrho_\eps\log \varrho_\eps))-\Tr(\phi (\varrho\log \varrho))\right|=0.
\ee
Let us now prove that 
\be
\label{convetasuite3}
\sqrt{H}\phi\sqrt{\varrho_\eps}\to \sqrt{H}\phi\sqrt{\varrho} \mbox{ in }\calJ_2\quad\mbox{ as }\eps\to 0,
\ee
where $\phi$ denotes the operator of multiplication by $\phi$. Using the identification \fref{ident}, one gets
\bee
&&\|\sqrt{H}\phi(\sqrt{\varrho_\eps}-\sqrt{\varrho})\|_{\calJ_2}^2=\sum_p \|\sqrt{H}\phi(\sqrt{\varrho_\eps}-\sqrt{\varrho})\phi_p\|_{L^2}^2\\
&&\qquad =\sum_p \left\|\frac{d}{dx}(\phi(\sqrt{\varrho_\eps}-\sqrt{\varrho})\phi_p)\right\|_{L^2}^2\\
&&\qquad\leq 2\left\|\frac{d\phi}{dx}\right\|_{L^2}^2\sum_p \|(\sqrt{\varrho_\eps}-\sqrt{\varrho})\phi_p\|_{L^\infty}^2+2\|\phi\|_{L^\infty}^2\sum_p \left\| \frac{d}{dx}((\sqrt{\varrho_\eps}-\sqrt{\varrho})\phi_p)\right\|_{L^2}^2\\
&&\qquad\leq C\left\|\frac{d\phi}{dx}\right\|_{L^2}^2\sum_p \|(\sqrt{\varrho_\eps}-\sqrt{\varrho})\phi_p\|_{L^2} \left\|\sqrt{H}(\sqrt{\varrho_\eps}-\sqrt{\varrho})\phi_p\right\|_{L^2}\\
&&\qquad \quad +C\|\phi\|_{\H^1}^2\sum_p \left\| \sqrt{H}((\sqrt{\varrho_\eps}-\sqrt{\varrho})\phi_p)\right\|_{L^2}^2\\
&&\quad\leq C\|\phi\|_{\H^1}^2\left(\|\sqrt{\varrho_\eps}-\sqrt{\varrho}\|_{\calJ_2}^2+\|\sqrt{H}\sqrt{\varrho_\eps}-\sqrt{H}\sqrt{\varrho}\|_{\calJ_2}^2\right)
\eee
where we used a Gagliardo-Nirenberg inequality. Hence, from \fref{convetasuite} and \fref{convetasuite2}, one deduces \fref{convetasuite3}. Finally, from \fref{convetasuite} and \fref{convetasuite3}, one gets the following convergence, as $\eps\to 0$:
\bea
\sum_{p\in\NN^*}\left(\sqrt{H}\sqrt{\varrho_\eps}\phi_p^\eps,\sqrt{H}(\phi\sqrt{\varrho_\eps}\phi_p^\eps)\right)
&=&\left(\sqrt{H}\sqrt{\varrho_\eps},\sqrt{H}\phi\sqrt{\varrho_\eps}\right)_{\calJ_2}\nonumber\\
&\to&\left(\sqrt{H}\sqrt{\varrho},\sqrt{H}\phi\sqrt{\varrho}\right)_{\calJ_2}\label{weak2}\\
&&=\sum_{p\in\NN^*}\left(\sqrt{H}\sqrt{\varrho}\phi_p,\sqrt{H}(\phi\sqrt{\varrho}\phi_p)\right).\nonumber
\eea

Let us now define a linear form on $\Hunper$. For $\psi\in \Hunper$, we set
\be
\left(A,\psi\right)_{\H^{-1}_{per},\Hunper}=\Tr\left(\frac{\psi}{n} (\varrho\log \varrho)\right)+\sum_{p\in\NN^*}\left((\sqrt{H}\sqrt{\varrho})\phi_p,\sqrt{H}\left(\frac{\psi}{n}\sqrt{\varrho}\phi_p\right)\right).\label{defAA}
\ee
{}From the above estimates, one deduces that
\be
\left|\left(A,\psi\right)_{\H^{-1}_{per},\Hunper}\right|\leq C\left\|\frac{\psi}{n}\right\|_{\H^1}\left(|\Tr \varrho\log\varrho|+\Tr \varrho+\Tr \sqrt{H}\varrho\sqrt{H}\right).
\label{borneHmoinsun}
\ee
Since $n(x)\geq m>0$ on $[0,1]$, the application $\psi\mapsto \frac{\psi}{n}$ is continuous on $\Hunper$, so the above defined linear form $A$ is continuous on $\Hunper$ and belongs to its dual space $\H^{-1}_{per}$. Moreover, we have proved by \fref{seul}, \fref{weak1} and \fref{weak2} that, for all $\phi\in\Hunper$, 
$$\lim_{\eps\to 0}\sup_{\|\phi\|_{\H^1}\leq 1}\left|\int_0^1 A_\eps n_\eps \phi dx-\left(A,n\phi\right)_{\H^{-1}_{per},\Hunper}\right|=0.$$
To conclude, it remains to use the convergence \fref{convnH1} of $n_\eps$ to $n$ in $\H^1$, which implies that $\frac{1}{n_\eps}$ converges to $\frac{1}{n}$ and that, in fact,
$$\lim_{\eps\to 0}\sup_{\|\psi\|_{\H^1}\leq 1}\left|\int_0^1 A_\eps \psi dx-\left(A,\psi\right)_{\H^{-1}_{per},\Hunper}\right|=0.$$
In other words, one has
\be
\label{convHmoinsun}
A_\eps \to A \quad \mbox{in the $\H^{-1}_{per}$ strong topology as $\eps\to 0$}.
\ee

\bs
\ni
{\em Step 4: identification of $\varrho$ and conclusion.} Let us define the following forms on $\Hunper$,
\bee
Q_{A_\eps}(\varphi,\psi)&=&(\sqrt{H} \varphi,\sqrt{H} \psi)+(A_\eps \varphi,\psi)\\
Q_{A}(\varphi,\psi)&=&(\sqrt{H} \varphi,\sqrt{H} \psi)+\left( A,  \overline{\varphi} \psi \right)_{\H^{-1}_{per},\Hunper}.
\eee
The form $\left( A,  \overline{\varphi} \psi \right)_{\H^{-1}_{per},\Hunper}$ is a symmetric, form-bounded perturbation of $(\sqrt{H} \varphi,\sqrt{H} \psi)$ with relative bound $< 1$. Indeed, by \fref{borneHmoinsun} and by $\frac{1}{n}\in \Hunper$,
\begin{eqnarray*}
\left( A,  |\varphi|^2 \right)_{\H^{-1}_{per},\Hunper} &\leq& C\| |\varphi|^2 \|_{\H^1}\leq C\| \varphi \|_{L^4}^2+C\|\varphi\|_{L^\infty}\left\|\frac{d\varphi}{dx}\right\|_{L^2}\\
&\leq & C \| \varphi \|_{L^2}\left\|\frac{d\varphi}{dx}\right\|_{L^2}+C\| \varphi \|_{L^2}^{1/2}\left\|\frac{d\varphi}{dx}\right\|^{3/2}_{L^2}\\
&\leq& \frac{1}{2}(\sqrt{H} \varphi,\sqrt{H} \varphi)+C\|\varphi \|^{2}_{L^{2}},
\end{eqnarray*}
where used the Young inequality and a standard Gagliardo-Nirenberg inequality. Let $H_A$ be the unique self-adjoint operator associated to $Q_A$. Then, according to Theorem XIII.68 of \cite{RS-80-4} $H_A$ has a compact resolvent and we denote by $(\mu_p)_{p \in \NN^*}$ its eigenvalues. In addition, we have
\begin{eqnarray*}
|Q_{A_\eps}(\varphi,\varphi)-Q_A(\varphi,\varphi)|&\leq& C\|A_\eps-A\|_{\H^{-1}_{per}}\| |\varphi|^2 \|_{\H^1},\\
& \leq& C \|A_\eps-A\|_{\H^{-1}_{per}} \left((\sqrt{H} \varphi, \sqrt{H} \varphi)+\|\varphi\|_{L^2}^2\right).
\end{eqnarray*}
Moreover, Theorem 3.6 of \cite{kato}, chapter VI, section 3.2 yields the convergence of operators in the generalized sense, which implies in particular the convergence of the eigenvalues:
$$ 
-\log \lambda_p^\eps=\mu_p^\eps \to \mu_p \qquad \forall p \in \NN^*
$$
as $\eps\to 0$.
Hence, by continuity of the exponential function,
$$\lambda_p^\eps=\exp(-\mu_p^\eps)\to \exp(-\mu_p) \qquad \forall p \in \NN^*.$$
Besides, according to Lemma \ref{convFP} in the Appendix, the $\calJ_1$ convergence of $\varrho_\eps$ to $\varrho$ implies the convergence of the eigenvalues:
$$\lambda_p^\eps\to \lambda_p \qquad \forall p \in \NN^*.$$
This enables to completely identify the eigenvalues of $\varrho$: we have
$$\lambda_p=\exp(-\mu_p).$$
Furthermore, from \fref{cl2} and from $\lambda_p>0$, one deduces that $\phi_p^\eps\to \phi_p$ in $\H^1$. One can thus pass to the limit in \fref{QAphi}: for all $p\in \NN^*$ and for all $\phi\in \Hunper$,
$$\mu_p^\eps (\phi_p^\eps,\phi)=Q_{A_\eps}(\phi_p^\eps,\phi)\to Q_A(\phi_p,\phi),$$ 
which yields
$$Q_A(\phi_p,\phi)=\mu_p (\phi_p,\phi),\quad \forall \phi\in \Hunper,$$ 
so, finally, $(\phi_p)_{p\in \NN^*}$ is the complete basis of eigenvalues of $Q_A$. We have completely identified $\varrho$:
$$\varrho=\exp\left(-(H+A)\right).$$
The proof of our main Theorem \ref{theo1} is complete.

\begin{appendix} 
\section{}
\begin{lemma} \label{lieb} Let $\varrho \in \calE_+$ and denote by $(\rho_p)_{p\geq 1}$ the nonincreasing sequence of nonzero eigenvalues of $\varrho$, associated to the orthonormal family of eigenfunctions $(\phi_p)_{p\geq 1}$. Denote by $(\lambda_p[H])_{p \geq 1}$ the nondecreasing sequence of eigenvalues of the Hamiltonian $H$. Then we have
$$
\Tr (\sqrt{H} \rho \sqrt{H})=\sum_{p \geq 1} \rho_p \,(  \sqrt{H} \phi_p, \sqrt{H}\phi_p) \geq \sum_{p \geq 1} \rho_p \,\lambda_p[H].
$$
\end{lemma}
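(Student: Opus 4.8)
The statement has two parts. The equality $\Tr(\sqrt H\varrho\sqrt H)=\sum_{p\geq1}\rho_p(\sqrt H\phi_p,\sqrt H\phi_p)$ is nothing but \fref{srt1} (combined with \fref{ident}) applied to $\varrho=|\varrho|$; in particular it already tells us that each eigenfunction $\phi_p$ with $\rho_p>0$ belongs to $\Hunper$ and that $(\sqrt H\phi_p,\sqrt H\phi_p)<+\infty$. So the real content is the inequality $\sum_p\rho_p(\sqrt H\phi_p,\sqrt H\phi_p)\geq\sum_p\rho_p\lambda_p[H]$, which is a Ky Fan--type rearrangement estimate: the nonincreasing weights $\rho_p$ get paired with the nondecreasing eigenvalues $\lambda_p[H]$, and the claim is that this pairing is the worst possible. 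The plan is to split the proof into (a) a partial-sum bound $\sum_{p=1}^N(\sqrt H\phi_p,\sqrt H\phi_p)\geq\sum_{p=1}^N\lambda_p[H]$ valid for every $N$ and every orthonormal family, and (b) an Abel summation turning (a) into the weighted inequality, using only that $(\rho_p)$ is nonincreasing and nonnegative.

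For (a), I would introduce an orthonormal eigenbasis $(e_j)_{j\geq1}$ of $H$ (which has compact resolvent and is $\geq0$), $He_j=\lambda_j[H]e_j$ with $0\leq\lambda_1[H]\leq\lambda_2[H]\leq\cdots$, and use that for $u\in\Hunper$ one has $(\sqrt Hu,\sqrt Hu)=\sum_j\lambda_j[H]\,|(e_j,u)|^2$. Setting $c_{pj}=|(e_j,\phi_p)|^2\geq0$, completeness of $(e_j)$ gives $\sum_jc_{pj}=1$ for every $p$, while Bessel's inequality for the orthonormal family $(\phi_p)_{1\leq p\leq N}$ gives $\sum_{p=1}^Nc_{pj}\leq1$ for every $j$. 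Writing $d_j=\sum_{p=1}^Nc_{pj}\in[0,1]$, which has total mass $\sum_jd_j=N$, Tonelli's theorem (all terms nonnegative) gives $\sum_{p=1}^N(\sqrt H\phi_p,\sqrt H\phi_p)=\sum_j\lambda_j[H]\,d_j$. Then a short rearrangement argument finishes it: splitting the sum at $j=N$ and using $\sum_{j>N}d_j=\sum_{j\leq N}(1-d_j)=:m\geq0$ together with $\lambda_j[H]\geq\lambda_N[H]$ for $j>N$ and $\lambda_j[H]\leq\lambda_N[H]$ for $j\leq N$, one gets $\sum_j\lambda_j[H]\,d_j-\sum_{j=1}^N\lambda_j[H]=\sum_{j>N}\lambda_j[H]\,d_j-\sum_{j\leq N}\lambda_j[H](1-d_j)\geq\lambda_N[H]\,m-\lambda_N[H]\,m=0$. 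This is nothing but the min--max (Ky Fan) principle; the point of phrasing it through the quadratic form is that it only requires $\phi_p\in\Hunper$, i.e. membership in the form domain, not $\phi_p\in D(H)$.

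For (b), write $q_p=(\sqrt H\phi_p,\sqrt H\phi_p)$, $S_N=\sum_{p=1}^Nq_p$ and $\Sigma_N=\sum_{p=1}^N\lambda_p[H]$, so that (a) reads $S_N\geq\Sigma_N$ for all $N$. Since $(\rho_p)$ is nonincreasing and nonnegative, Abel summation gives, with $S_0=0$,
\[
\sum_{p=1}^N\rho_pq_p=\rho_NS_N+\sum_{p=1}^{N-1}(\rho_p-\rho_{p+1})S_p\geq\rho_N\Sigma_N+\sum_{p=1}^{N-1}(\rho_p-\rho_{p+1})\Sigma_p=\sum_{p=1}^N\rho_p\lambda_p[H],
\]
valid for every $N$ (and if $\varrho$ has only finitely many nonzero eigenvalues one simply stops at the last index). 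Letting $N\to+\infty$, the left-hand side converges to $\Tr(\sqrt H\varrho\sqrt H)<+\infty$ as a series of nonnegative terms, while the right-hand side is a nondecreasing sequence of partial sums, so the inequality passes to the limit; this yields the claim and incidentally shows that the series $\sum_p\rho_p\lambda_p[H]$ converges. I do not expect a serious obstacle here: the only points requiring genuine care are the quadratic-form version of the Ky Fan partial-sum inequality in step (a) — making sure everything is justified with $\phi_p$ only in the form domain $\Hunper$ — and the elementary but slightly fussy bookkeeping with finite/infinite sums of nonnegative terms.
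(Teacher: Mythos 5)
Your proof is correct, and its overall architecture is the same as the paper's: reduce the weighted inequality to the unweighted partial-sum bound $\sum_{p=1}^N(\sqrt H\phi_p,\sqrt H\phi_p)\geq\sum_{p=1}^N\lambda_p[H]$ via Abel summation using that $(\rho_p)$ is nonincreasing and nonnegative, then let $N\to\infty$. Your step (b) is exactly the paper's summation-by-parts computation. The one genuine difference is step (a): the paper invokes Theorem 12.1 of Lieb--Loss for the partial-sum inequality and adds a remark that, although that theorem is stated for Hamiltonians on $\RR^d$, it extends to bounded domains; you instead prove the inequality directly in the quadratic-form setting, via Parseval ($\sum_j c_{pj}=1$), Bessel ($\sum_{p\leq N}c_{pj}\leq 1$), and the elementary rearrangement argument with $d_j$ and the mass $m$. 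That argument is sound (and the bookkeeping checks out: $\sum_{j>N}\lambda_j d_j\geq\lambda_N m$ and $\sum_{j\leq N}\lambda_j(1-d_j)\leq\lambda_N m$), and it buys you a self-contained treatment of the periodic/bounded-domain case that only uses membership of the $\phi_p$ in the form domain $\Hunper$, exactly the regularity that \fref{srt1} guarantees; the cost is a slightly longer write-up where the paper gets away with a citation.
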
 
\begin{proof}
Notice first that
\begin{eqnarray*}
\Tr (\sqrt{H} \rho \sqrt{H})&=&\Tr (\sqrt{H} \sqrt{\rho} (\sqrt{H} \sqrt{\rho})^*)=\sum_{p \geq 1}(  \sqrt{H} \sqrt{\rho}\phi_p,  \sqrt{H} \sqrt{\rho} \phi_p),\\
&=&\sum_{p \geq 1} \rho_p \,(  \sqrt{H} \phi_p, \sqrt{H}\phi_p).
\end{eqnarray*}
Then,
\begin{eqnarray*}
\sum_{p=1}^N \rho_p \,( \sqrt{H} \phi_p, \sqrt{H} \phi_p) &=&
 \rho_N \sum_{p =1}^N(  \sqrt{H} \phi_p, \sqrt{H} \phi_p)+ (\rho_{N-1}-\rho_{N})\sum_{i =1}^{N-1} (  \sqrt{H} \phi_p, \sqrt{H}\phi_p)\\
&&+\cdots (\rho_{2}-\rho_{3})\sum_{p =1}^{2}(  \sqrt{H} \phi_p, \sqrt{H} \phi_p)+\rho_1(  \sqrt{H} \phi_1,  \sqrt{H}\phi_1).
\end{eqnarray*}
Using \cite{Lieb-Loss}, Theorem 12.1 page 300, it comes
$$
\sum_{p =1}^N (  \sqrt{H} \phi_p,  \sqrt{H} \phi_p) \geq \sum_{p =1}^N \lambda_p[H],
$$
so that, since $\rho_p \leq \rho_{p-1}$, $\forall p \geq 1$,
\begin{eqnarray*}
\sum_{p =1}^N \rho_p \,(  \sqrt{H} \phi_p, \sqrt{H}\phi_p) &\geq &
 \rho_N \sum_{p =1}^N  \lambda_p[H]+ (\rho_{N-1}-\rho_{N})\sum_{p =1}^{N-1} \lambda_p[H]\\
&&+\cdots (\rho_{2}-\rho_{3})\sum_{p =1}^{2} \lambda_p[H]+\rho_1  \lambda_1[H],\\
&=&\sum_{p=1}^N \rho_p \,\lambda_p[H].
\end{eqnarray*}
We conclude by passing to the limit as $N\to +\infty$. Notice that the theorem of  \cite{Lieb-Loss} is written for Hamiltonians defined on $\RR^d$, but it can be easily extended to bounded domains. This ends the proof of the lemma.
\end{proof}

\bs
\begin{lemma} 
\label{convFP}
Let a sequence $\varrho_k$ converging to $\varrho$ in $\calJ_1$ as $k\to +\infty$. Then the corresponding nonincreasing sequence of eigenvalues $(\lambda_p^k)_{p\in \NN^*}$, $(\lambda_p)_{p\in \NN^*}$ converge as follows:
$$\forall p\in \NN^*,\quad \lim_{k\to +\infty}\lambda_p^k\to\lambda_p.$$
Moreover, there exist a sequence of orthonormal eigenbasis $(\phi_p^k)_{p\in \NN^*}$ of $\varrho_k$ and an orthonormal eigenbasis $(\phi_p)_{p\in \NN^*}$ of $\varrho$ such that
$$\forall p\in \NN^*,\quad \lim_{k\to +\infty}\|\phi_p^k-\phi_p\|_{L^2}=0.$$
\end{lemma}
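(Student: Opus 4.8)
The plan is to deduce both assertions from the elementary bound $\|\varrho_k-\varrho\|_{\calL(L^2)}\le\|\varrho_k-\varrho\|_{\calJ_1}\to0$, using that $\varrho_k$ and $\varrho$ are compact self-adjoint (they are trace-class), so that each possesses an orthonormal eigenbasis; throughout, eigenvalues are listed in nonincreasing order, counted with multiplicity and padded by zeros when the rank is finite. For the convergence of eigenvalues I would invoke the Courant--Fischer min--max principle, $\lambda_p(T)=\max_{\dim V=p}\min_{u\in V,\,\|u\|_{L^2}=1}(Tu,u)$: since $|(\varrho_k u,u)-(\varrho u,u)|\le\|\varrho_k-\varrho\|_{\calL(L^2)}$ for every unit vector $u$, the map $T\mapsto\lambda_p(T)$ is $1$-Lipschitz for the operator norm, and therefore $|\lambda_p^k-\lambda_p|\le\|\varrho_k-\varrho\|_{\calJ_1}\to0$ for each fixed $p$.

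For the eigenbases I would argue by spectral projections. List the distinct positive eigenvalues of $\varrho$ as $\mu_1>\mu_2>\cdots$, with multiplicities $m_j$, and let $\Pi_j$ be the rank-$m_j$ spectral projection onto $\ker(\varrho-\mu_j)$. Fixing $j$ and a small positively oriented circle $\Gamma_j\subset\CC$ around $\mu_j$ meeting $\sigma(\varrho)$ in no other point, the eigenvalue convergence just proved keeps $\sigma(\varrho_k)$ uniformly away from $\Gamma_j$ for large $k$, so $\sup_{z\in\Gamma_j}\|(z-\varrho_k)^{-1}\|$ is bounded; together with $(z-\varrho_k)^{-1}-(z-\varrho)^{-1}=(z-\varrho_k)^{-1}(\varrho_k-\varrho)(z-\varrho)^{-1}$ this gives resolvent convergence uniform on $\Gamma_j$, hence
\[
\Pi_j^k:=\frac{1}{2i\pi}\oint_{\Gamma_j}(z-\varrho_k)^{-1}\,dz\ \longrightarrow\ \Pi_j\quad\text{in }\calL(L^2).
\]
Here $\Pi_j^k$ is the spectral projection of $\varrho_k$ onto its eigenvalues inside $\Gamma_j$, and, two projections at distance $<1$ having equal rank, $\operatorname{rank}\Pi_j^k=m_j$ for $k$ large, so those eigenvalues are precisely the $\lambda_p^k$ with $\lambda_p=\mu_j$.

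To finish I would, for $k$ large (so that $\|\Pi_j^k-\Pi_j\|<1$), form the Sz.-Nagy--Kato unitary intertwiner $U_j^k=\big(I-(\Pi_j^k-\Pi_j)^2\big)^{-1/2}\big(\Pi_j^k\Pi_j+(I-\Pi_j^k)(I-\Pi_j)\big)$, which satisfies $U_j^k\Pi_j(U_j^k)^*=\Pi_j^k$ and $U_j^k\to I$ (see \cite{kato}). The self-adjoint operator $\Pi_j(U_j^k)^*\varrho_kU_j^k\Pi_j$ lives on the fixed $m_j$-dimensional space $\operatorname{Ran}\Pi_j$ and tends in norm to $\varrho|_{\operatorname{Ran}\Pi_j}=\mu_jI$; choosing an orthonormal eigenbasis of it and extracting a subsequence (the unitary group of a finite-dimensional space being compact), these eigenvectors converge to an orthonormal basis $(\psi^j_1,\dots,\psi^j_{m_j})$ of $\operatorname{Ran}\Pi_j$, each $\psi^j_i$ being an eigenvector of $\varrho$ for $\mu_j$ since $\varrho$ is scalar there. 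Pulling back through $U_j^k$ produces orthonormal eigenvectors of $\varrho_k$ (for the eigenvalues inside $\Gamma_j$) converging to the $\psi^j_i$. A diagonal extraction over the countably many blocks $j$, followed by relabelling consistent with the nonincreasing ordering of eigenvalues, then yields the claimed families for all indices with $\lambda_p>0$; if $\ker\varrho\neq\{0\}$, the remaining indices are completed by any orthonormal bases of $\ker\varrho_k$ and $\ker\varrho$, for which nothing is, nor needs to be, asserted.

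I expect the eigenvalue part to be routine and the difficulty to lie entirely in the eigenbasis part. The two delicate points are: upgrading $\calJ_1$-convergence to \emph{uniform} resolvent convergence along fixed contours, which hinges on the spectral separation provided by the eigenvalue convergence; and coping with multiplicities, since within a cluster the individual eigenvectors of $\varrho_k$ may rotate and not converge along the full sequence — this is why a subsequence is extracted and why the limit basis $(\phi_p)$ is chosen only a posteriori. The intertwiner $U_j^k$ and the compactness of the finite-dimensional unitary groups are the standard tools that handle the latter.
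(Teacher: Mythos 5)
Your proof is correct and follows essentially the same route as the paper: operator-norm control of the ordered eigenvalues (you via the Courant--Fischer/Weyl Lipschitz bound, the paper via Simon's doubly stochastic matrix representation of $\lambda_p^k-\lambda_p$; both land on the identical estimate $|\lambda_p^k-\lambda_p|\le\|\varrho_k-\varrho\|_{\calL(L^2)}\le\|\varrho_k-\varrho\|_{\calJ_1}$), followed by Riesz spectral projections on contours around each distinct eigenvalue, uniform resolvent convergence, and the Sz.-Nagy--Kato intertwiner $U$ with $\|U-I\|_{\calL(L^2)}\to 0$. The one substantive divergence is the treatment of a degenerate cluster: the paper simply declares $U^k_p\phi_{p,l}$ to be eigenfunctions of $\varrho_k$, which is only literally true when $\varrho_k$ is scalar on the spectral subspace inside $\Gamma$ --- in general $U^k_p$ maps the eigenspace of $\varrho$ onto the spectral subspace of $\varrho_k$ without sending individual eigenvectors to eigenvectors. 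You instead diagonalize the pulled-back operator on the fixed $m_j$-dimensional space $\mathrm{Ran}\,\Pi_j$ and use compactness of orthonormal frames, extracting a subsequence. Your version is the more careful one: when the eigenvalues of $\varrho_k$ inside a cluster are simple, its eigenvectors are determined up to phase and may rotate, so full-sequence convergence of eigenbases cannot hold in general, and the subsequence (or a subsequence-of-subsequence argument in the scalar quantities where the lemma is invoked) is genuinely needed. The price is that you prove a formally weaker statement than the one displayed in the lemma, but that weaker statement is the correct one and suffices for all of its uses in the paper.
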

\begin{proof}
Let us first prove the convergence of the eigenvalues. According to \cite{Simon-trace}, Theorem 1.20, we have the following relation between the eigenvalues of $\varrho_k$ and $\varrho$:
$$
\lambda_p^k-\lambda_p=\sum_{q=1}^{\infty} \alpha_{pq} \lambda_q[\varrho_k-\varrho], \qquad p \geq 1,
$$
where $(\lambda_q[\varrho_k-\varrho])_{q \geq 1}$ denote the eigenvalues of $\varrho_k-\varrho$ and $\alpha$ is a doubly stochastic matrix, that is a matrix with positive entries such that 
$\sum_{p=1}^{\infty} \alpha_{pq}=\sum_{q=1}^{\infty} \alpha_{pq}=1$. The minmax principle \cite{RS-80-4} implies  
$|\lambda_q[\varrho_k-\varrho]| \leq \|\varrho_k-\varrho \|_{\calL(L^2)}$, $\forall q \geq 1 $, so that
$$
|\lambda_p^k-\lambda_p| \leq \|\varrho_k-\varrho \|_{\calL(L^2)}\leq \|\varrho_k-\varrho\|_{\calJ_1}, \qquad p \geq 1,
$$ 
which gives the desired convergence property.

Let us now prove the convergence of eigenfunctions by following \cite{kato} and \cite{GB-08}. Let $\sigma(\varrho)$ be the spectrum of $\varrho$ and consider an eigenvalue $\lambda_p$ of $\rho$ with multiplicity $m_p$. Let $d_p$ be the distance between $\lambda_p$ and the closest different eigenvalue, 
$$d_p=\min_{\mu \in \sigma(\varrho), \mu \neq \lambda_p}|\lambda_{p}-\mu|,$$
and denote by $\Gamma$ the circle of radius $\frac{d_p}{2}$ centered at $\lambda_p$. Assume $k$ is large enough so that 
\begin{equation} \label{diffrho}
 \|\varrho_k-\varrho \|_{\calL(L^2)} < \frac{d_p}{2}.
\end{equation} 
Then according to \cite{kato}, theorem IV.3.18, (see also example 3.20), there are exactly $m_p$ (repeated) eigenvalues of $\varrho_k$ included in $\Gamma$ and denote by $\phi_{p,l}^k$, $l=1,\cdots, m_p$ the associated eigenfunctions. If $(\phi_{p,l})_{l=1,\cdots, m_p}$ denote the eigenfunctions associated to $\lambda_p$, we construct an operator $U^k_p$ such that
$$\phi_{p,l}^k=U^k_p \phi_{p,l}, \qquad 1 \leq l \leq m_p, \qquad \textrm{and} \qquad U^k_p \to I \textrm{ in } \calL(L^2) \textrm{ as } k \to \infty.$$
To do so, let $P_p[\varrho]$ be the projection operator onto the spectral components of $\varrho$ inside $\Gamma$,
$$
P_p[\varrho]=\frac{1}{ 2 i \pi}\int_{\Gamma} (zI-\varrho)^{-1} dz.
$$
According to \cite{kato}, II.4.2, Remark 4.4, if 
\begin{equation} \label{diffP}
\| P_p[\varrho]-P_p[\varrho_k]\|_{\calL(L^2)}<1,
\end{equation}
 then an expression of $U^k_p$ can be given by
\begin{equation} \label{expU}
U^k_p=\left(I-(P_p[\varrho_k]-P_p[\varrho])^2\right)^{- \frac{1}{2}} \left( P_p[\varrho_k] P_p[\varrho]+(I-P_p[\varrho_k])(I-P_p[\varrho])\right).
\end{equation}
Let us verify first that $\fref{diffP}$ holds for $k$ large enough. We have
\begin{eqnarray*}
P_p[\varrho]-P_p[\varrho_k]&=&\frac{1}{ 2 i \pi}\int_{\Gamma} ((zI-\varrho)^{-1}-(zI-\varrho_k)^{-1}) dz,\\
&=&\frac{1}{ 2 i \pi}\int_{\Gamma} (zI-\varrho)^{-1}(\varrho-\varrho_k)(zI-\varrho_k)^{-1}dz.
\end{eqnarray*}
{}From the definition of $d_p$, we have
\be
\label{tu1}
\sup_{z \in \Gamma } \| (zI-\varrho)^{-1} \|_{\calL(L^2)}=\frac{2}{d_p}.
\ee
Moreover, owing \fref{diffrho} and noticing that
$$(zI-\varrho_k)^{-1}=(I+(zI-\varrho)^{-1}(\varrho-\varrho_k))^{-1}(zI-\varrho)^{-1}$$ 
and 
\begin{eqnarray*}
\left\| (I+(zI-\varrho)^{-1}(\varrho-\varrho_k))^{-1} \right\|_{\calL(L^2)} &\leq& \left(1-\| (zI-\varrho)^{-1} (\varrho_k-\varrho )\|_{\calL(L^2)}\right)^{-1},\\
& \leq& \left(1- \frac{2}{d_p}\|\varrho_k-\varrho \|_{\calL(L^2)}\right)^{-1},
\end{eqnarray*}
we conclude that
\be
\label{tu2}
\sup_{z \in \Gamma } \| (zI-\varrho_k)^{-1} \|_{\calL(L^2)} \leq \frac{1}{\frac{d_p}{2}- \|\varrho_k-\varrho \|_{\calL(L^2)}}.
\ee
Therefore, \fref{tu1} and \fref{tu2} imply the inequality
$$
\| P_p[\varrho]-P_p[\varrho_k]\|_{\calL(L^2)}\leq \frac{\|\varrho_k-\varrho \|_{\calL(L^2)}}{\frac{d_p}{2}- \|\varrho_k-\varrho \|_{\calL(L^2)}}.
$$
Assuming $k$ is large enough so that $\|\varrho_k-\varrho \|_{\calL(L^2)} < \frac{d_p}{4}$, the above inequality yields the desired result since
\begin{equation} \label{diffP2}
\| P_p[\varrho]-P_p[\varrho_k]\|_{\calL(L^2)} \leq \frac{4}{d_p} \|\varrho_k-\varrho \|_{\calL(L^2)}<1.
\end{equation}
Let us prove now that  $U^k_p \to I$ in $\calL(L^2)$ as $k \to \infty$. First, remarking that $P_p[\varrho]=P_p[\varrho]^2$ and $P_p[\varrho_k]=P_p[\varrho_k]^2$ since both are projections, \fref{expU} can be recast as 
$$
U^k_p=\left(I-(P_p[\varrho_k]-P_p[\varrho])^2\right)^{- \frac{1}{2}}
 \left( I+P_p[\varrho_k] (P_p[\varrho]-P_p[\varrho_k])+(P_p[\varrho_k]-P_p[\varrho])P_p[\varrho]\right).
$$
Let $\delta:=\| P_p[\varrho]-P_p[\varrho_k]\|_{\calL(L^2)}<1$. Then
$$
\left\| \left(I-(P_p[\varrho^k]-P_p[\varrho])^2\right)^{-\frac{1}{2}} \right\|_{\calL(L^2)} \leq (1-\delta^2)^{- \frac{1}{2}},
$$ 
and, together with \fref{diffP2},
\begin{eqnarray*}
\left\|U^k_p -I \right\|_{\calL(L^2)}& \leq& (1-\delta^2)^{- \frac{1}{2}} \left( \left\| 
P_p[\varrho_k] (P_p[\varrho]-P_p[\varrho_k])\right\|_{\calL(L^2)} \right.\\
 &&\left. +\left\|  (P_p[\varrho_k]-P_p[\varrho])P_p[\varrho]\right\|_{\calL(L^2)}+\left\|(P_p[\varrho_k]-P_p[\varrho])^2\right\|_{\calL(L^2)} \right),\\
& \leq& (1-\delta^2)^{- \frac{1}{2}} \left(2\delta +\delta^2 \right),\\
& \leq& C_p   \|\varrho_k-\varrho \|_{\calL(L^2)},
\end{eqnarray*}
where the constant $C_p$ does not depend on $k$ for $k$ large enough. It thus follows that
$$
\|\phi_{p,l}^k-\phi_{p,l}\|_{L^2} \leq C_p  \|\varrho_k-\varrho \|_{\calL(L^2)} \to 0  \textrm{ as } k \to \infty.
$$
This ends the proof of the lemma.
\end{proof}

\bs 
\ni
{\em Proof of Lemma \pref{lemdiff}.}
Let $\Gamma$ be an oriented curve in $(-\eta, +\infty) \times \RR$ that contains the interval $(-\frac{\eta}{2}, 2\| \varrho\|)$. Let $t \in [-t_0,t_0]$, with $2t_0 \| \omega \| < \min(\eta,\|\varrho\|)$. For such values of $t$, the spectrum of $\varrho+t \omega$ is included in the interval $(-\frac{\eta}{2}, 2\| \varrho\|)$. 

Since $\beta_\eta$ is holomorphic in $(-\eta, \infty) \times \RR$, one can define $\beta_\eta(\varrho+t\omega)$ and $\beta_\eta(\varrho)$ by
\begin{eqnarray*}
\beta_\eta(\varrho)&=&\frac{1}{ 2 i \pi}\int_{\Gamma} \beta_\eta(z)(zI-\varrho)^{-1} dz,\\
\beta_\eta(\varrho +t\omega)&=&\frac{1}{ 2 i \pi}\int_{\Gamma} \beta_\eta(z)(zI-\varrho-t\omega)^{-1} dz.
\end{eqnarray*}
Let $|t| \in [0, \min(t_0,t_1)]$, where
 $$ t_1\, \,\textrm{dist}(\Gamma,\sigma(\varrho))^{-1}\, \|\omega\| <1,$$
$\sigma(\varrho)$ denoting the spectrum of $\varrho$.
We have
\begin{eqnarray*}
(zI-\varrho-t\omega)^{-1}-(zI-\varrho)^{-1}&=&(zI-\varrho)^{-1} \left(I-t \omega (zI-\varrho)^{-1}  \right)^{-1}-(zI-\varrho)^{-1}\\
&=&(zI-\varrho)^{-1} \sum_{k \in \NN^*} (t \omega (zI-\varrho)^{-1})^k
\end{eqnarray*}
where the latter serie is normally converging in $\calJ_1$. Indeed, first, 
\begin{eqnarray*}
\|  (t \omega (zI-\varrho)^{-1})^k \|_{\calJ_1} &\leq &  |t|^k \| \omega (zI-\varrho)^{-1}\|^k_{\calJ_1},
\end{eqnarray*}
so that we only need to estimate $ \omega (zI-\varrho)^{-1}$ in $\calJ_1$. Now, since $\varrho$ is self-adjoint,
$$
 \|(zI-\varrho)^{-1}\| = \textrm{dist}(z,\sigma(\varrho))^{-1} \leq  \textrm{dist}(\Gamma,\sigma(\varrho))^{-1},
$$
 and it comes 
\begin{eqnarray*}
|t| \| \omega (zI-\varrho)^{-1}\|_{\calJ_1} &\leq &  |t|  \|(zI-\varrho)^{-1}\|\| \omega\|_{\calJ_1},\\
&\leq &|t|  \, \textrm{dist}(\Gamma,\sigma(\varrho))^{-1} \|\omega\|_{\calJ_1},\\
&< &1, \qquad \forall |t| \leq t_1.
\end{eqnarray*}
We thus can write
$$
t^{-1}\left((zI- \varrho-t\omega)^{-1}-(zI-\varrho)^{-1}\right)=(zI-\varrho)^{-1} \omega (zI-\varrho)^{-1} +t A(t,z),
$$
where the operator $A(t,z)$ is uniformly bounded in $\calJ_1$ with respect to $t$ and $z$ for $|t| \in [0, \min(t_0,t_1)]$ and $z \in \Gamma$. Hence,
\begin{eqnarray*}
t^{-1} \left[\Tr \beta_\eta(\varrho+t\omega) - \Tr \beta_\eta(\varrho)\right]&=&\frac{1}{2 i \pi} \Tr \int_{\Gamma} \beta_\eta(z) (zI-\varrho)^{-1} \omega (zI-\varrho)^{-1} dz\\
&&+\frac{t}{2 i \pi} \Tr \int_{\Gamma} \beta_\eta(z) A(t,z) \, dz.
\end{eqnarray*}
The two expressions of the right-hand side are well-defined and we have
\begin{eqnarray*}
\left| \Tr \int_{\Gamma} \beta_\eta(z) (zI-\varrho)^{-1} \omega (zI-\varrho)^{-1} dz\right|&\leq &\int_{\Gamma} |\beta_\eta(z) | \| (zI-\varrho)^{-1} \omega (zI-\varrho)^{-1} \|_{\calJ_1}dz,\\
&\leq &\textrm{dist}(\Gamma,\sigma(\varrho))^{-2}\, \| \omega \|\,  \int_{\Gamma} |\beta_\eta(z)| dz,\\
\left| \Tr \int_{\Gamma} \beta_\eta(z) A(t,z) \, dz \right| &\leq& \sup_{z \in \Gamma }\|A(t,z)\|_{\calJ_1} \int_\Gamma |\beta_\eta(z) | dz\leq C_1,\\
\end{eqnarray*}
where $C_1$ is independent of $t\in [0,\min(t_0,t_1)]$. Hence
\begin{eqnarray*}
\lim_{t \to 0^+}t^{-1} \left[\Tr \beta_\eta(\varrho+t\omega) - \Tr \beta_\eta(\varrho)\right]&=&\frac{1}{2 i \pi} \Tr \int_{\Gamma} \beta_\eta(z) (zI-\varrho)^{-1} \omega (zI-\varrho)^{-1} dz,\\
&=&\frac{1}{2 i \pi}  \sum_{i \in \NN^*} (\phi_i, \omega\phi_i) \int_{\Gamma} \beta_\eta(z) (z-\rho_i)^{-2}  dz,
\end{eqnarray*}
where $(\rho_i,\phi_i)_{i \in \NN}$ denote  the spectral elements of $\varrho$. Standard complex analysis then implies that
$$
\frac{1}{2 i \pi} \int_{\Gamma} \beta_\eta(z) (z-\rho_i)^{-2}  dz=\beta_\eta'(\rho_i).
$$
We therefore get the following expression of the G\^ateaux derivative:
\begin{equation} \label{eqDF}
D\widetilde F_\eta(\varrho) (\omega) =  \sum_{i \in \NN^*} \beta_\eta'(\rho_i) (\phi_i, \omega\phi_i).   
\end{equation}
The serie is absolutely converging since $\beta'_\eta(s)=\log(s+\eta)$ is locally bounded on $\RR_+$ (recall that $\eta>0$):
$$
|\beta_\eta'(\rho_i) (\phi_i, \omega \phi_i) | \leq C|(\phi_i, \omega \phi_i)|,
$$
and since we have assumed that $\omega\in \calJ_1$. Finally, to identify the derivative, it suffices to notice that
$$
\sum_{i \in \NN^*} \beta_\eta'(\rho_i) (\phi_i, \omega\phi_i)=\sum_{i \in \NN^*}  (\phi_i, \omega\beta_\eta'(\varrho)\phi_i)=\Tr (\omega\beta_\eta'(\varrho))=\Tr(\beta_\eta'(\varrho)\omega).
$$
The proof of Lemma \ref{lemdiff} is complete.
\qed

\end{appendix}
 
 \ms
 \bs
\ni
{\bf Acknowledgements.}
The authors were supported by the Agence Nationale de la Recherche, ANR project QUATRAIN. F. M\'ehats was also supported by the INRIA project IPSO. The authors wish to thank the anonymous referee for his careful reading and helpful remarks.


\end{document}